\pgfplotsset{compat=1.15}
\newtheorem{theoreme}{Théorème}[section]
\newtheorem{theorem}[theoreme]{Theorem}
\newtheorem{prop-f}[theoreme]{Proposition}
\newtheorem{lemma}[theoreme]{Lemma}
\newtheorem{definition}[theoreme]{Definition}
\newtheorem{rem}[theoreme]{Remarque}
\newcommand{\E}{\mathbb{E}}
\newcommand{\N}{\mathbb{N}}
\renewcommand{\P}{\mathbb{P}}
\newcommand{\R}{\mathbb{R}}
\newcommand{\Z}{\mathbb{Z}}
\renewcommand{\L}{\mathcal{L}}
\newcommand{\cC}{\mathcal{C}}
\newcommand{\cF}{\mathcal{F}}
\newcommand{\ent}{\lfloor nt \rfloor}
\newcommand{\ens}{\lfloor ns \rfloor}
\newcommand{\ents}{\lceil nt \rceil}
\newcommand{\dist}{\mathrm{dist}}
\newcommand{\sqn}{\sqrt{n}}
\newcommand{\bt}{_{|t}}
\newcommand{\btn}{_{|t_n}}
\newcommand{\fS}{\mathfrak{S}}
\newcommand{\oS}{\overline{S}}
\renewcommand{\d}{\mathrm{d}}
\newcommand{\Lip}{\mathrm{Lip}}
\def\indic{\mathbf 1}
\renewcommand{\epsilon}{\varepsilon}
\renewcommand{\phi}{\varphi}
\numberwithin{equation}{section}
\begin{document}

\title{Rate of convergence of the conditioned random walk towards the Brownian bridge}
\author{L. Decreusefond\footnote{Telecom Paris, laurent.decreusefond@telecom-paris.fr} \and A. Jacquet\footnote{Telecom Paris, antonin.jacquet@telecom-paris.fr}}
\date{}
\maketitle
\begin{abstract}
	We study the rate of convergence of two discrete processes towards the Brownian bridge: the random walk conditioned to be zero at time $2n$ and the empirical process which appears in the Glivencko-Cantelli theorem. Combining a functional Stein method with  a Radon–Nikodym representation of the bridge, we bound the Fortet–Mourier distance between these conditioned processes and the Brownian bridge.

\end{abstract}

\section{Introduction}\label{s: section introduction.}

The Stein's method is a powerful tool to derive rates of convergence in distributional limit theorems. It is mainly applicable to distances between probability measures that can be expressed as
\begin{equation*}
	\text{dist}_{\mathcal{F}}(\mu,\nu)=\sup_{F\in \mathcal{F}} \left( \int F \d\mu-\int F \d\nu \right),
\end{equation*}
where $\mathcal{F}$ is a suitable class of test functions. When $\mathcal{F}$
is the set of Lipschitz functions with Lipschitz constant bounded by $1$, the corresponding distance is the Wasserstein-1 or Kantorovitch-Rubinstein distance \cite{Villani2003}. Choosing $\mathcal{F}$ as the class $\{\mathbf 1_{(-\infty,x]},x\in \R\}$, $\dist_{\mathcal{F}}$ yields the Kolmogorov distance. The Fortet-Mourier distance, which defines the topology of convergence in law is obtained when $\mathcal{F}$ is the set of bounded and Lipschitz  functions, see~\cite{Dudley2002}. The first step of the deployment of Stein's method consists in identifying a functional characterization of the target distribution~$\mu$. This is usually done by finding a Markov process for which $\mu$ is the stationary distribution, and then computing its generator. This task has been achieved for a limited number of target measures, including the normal distribution \cite{Decreusefond2015}, the Poisson distribution,  stable distributions \cite{Chen_2021} and more recently max-stable distributions \cite{Costaceque24_cpn,Costaceque24,Costaceque24_mlti}.

When the target distribution corresponds to a stochastic process, such as Brownian motion, the situation is more delicate. The generator has been constructed in \cite{Coutin2013} when we consider that the Brownian motion is an element of fractional Sobolev spaces. In \cite{Shih2011}, the theory has been established when the Brownian motion is viewed as an element of the space of continuous functions on $[0,T]$.
However, both constructions require to compute a trace term which is notoriously difficult to estimate.
In \cite{CouDec2020}, the authors circumvented this issue by discretizing all the processes under consideration. The distance between the processes and their affine interpolation is estimated using sample-path inequalities, whereas the distance between the affine interpolations is bounded using the multivariate Stein's method. The global conclusion of these previous developments is that it is highly challenging to apply Stein's method directly to infinite-dimensional processes. In \cite{besancon:hal-03283778}, several functional limit theorems for Markov processes are obtained by using transformations of Poisson measures and Brownian motions. Formally speaking, to estimate the distance between $\mu$ and $\nu$, one first finds a transformation $T$ and two probability measures $\mu'$ and $\nu'$ such that $T\mu'=\mu$ and $T\nu'=\nu$. If $T$ is Lipschitz continuous (which is rather common  in the usual applications), then
\begin{equation*}
	\text{dist}_{\Lip_1}(\mu,\nu)= \|T\|_\Lip\ \text{dist}_{\Lip_1}(\mu',\nu'),
\end{equation*}
where $\Lip_1$ is the set of Lipschitz functions with Lipschitz constant bounded by $1$. It then remains to estimate the distance between $\mu'$ and $\nu'$  using Stein's method. It is proved in particular that the random walk forced to be $0$ at time $2n$ converges to the Brownian bridge with rate $O(n^{-1/6} \log n)$ in the Wasserstein-1 distance on $\cC([0,1],\R)$ since the map
\begin{align*}
	T: \cC([0,1],\R) & \to \cC([0,1],\R)                              \\
	f                & \mapsto \left( t \mapsto f(t) - t f(1) \right)
\end{align*}
is $1$-Lipschitz from $\cC([0,1],\R)$ to itself. But this does not answer the question of determining the rate of convergence of the law of the random walk conditioned to be $0$ at time $2n$ towards the law of the Brownian bridge.

This is the motivation of our current work. The main difficulty is to represent the conditional laws in a way that make the computations tractable. It turns out that the crucial remark is to note that for any time $t<1$, the law of the Brownian bridge on $[0,t]$ is absolutely continuous with respect to the law of the ordinary Brownian motion up to time $t$. Furthermore, the Radon-Nikodym derivative has a known expression which reveals to be a Lipschitz continuous function of the Brownian sample-path. On the remaining interval $[t,1]$, the Brownian bridge cannot be large by continuity and the fact that it is zero at time $1$. The same considerations apply to the random walk.
Then, by choosing conveniently the time $t$ as a function of $n$ and using \cite{CouDec2020}, we can find the rate of convergence of the conditioned random walk towards the Brownian bridge. For technical reasons, we need to restrict our analysis the Fortet-Mourier distance, which is slightly stronger than the Wasserstein-1 distance. The same approach allows us to derive a rate of convergence for the empirical process of the Glivenko-Cantelli theorem towards the Brownian bridge since the empirical process can be represented as a Poisson random walk conditioned to be $0$ at time $1$. The actual result holds for more general lattice distributions and the rate of convergence depends on the convergence rates of some local limits of the increments distribution.
It is clear that the absolute continuity of the Brownian bridge with respect to the Brownian motion can not hold up to time $1$ and the price to pay to handle the singularity at time $1$ is a loss in the rate of convergence.
The paper is organized as follows. Section \ref{s: section 2.} presents the main results, Theorem \ref{thm: theoreme principal rademacher.} and Theorem \ref{thm: theoreme principal GC.}, with Theorem \ref{thm: theoreme principal GC.} being an immediate consequence of Theorem \ref{thm: theoreme principal poisson moins 1.}.
These theorems follow from the result for more general lattice distributions, which is stated in Section \ref{s:section3} and requires the introduction of several auxiliary objects. All proofs are given in Section \ref{s: section 4.}.

\section{Main results}\label{s: section 2.}
The estimates established in this paper are formulated in terms of the Fortet–Mourier distance. Recall that this distance is defined by
\begin{equation*}
	\dist_{F.M.}(\mu,\nu) = \sup_{F \in \cF} \left( \int F \mathrm{d}\mu - \int F \mathrm{d}\nu  \right),\label{eq: definition de la distance de Fortet-Mourier.}
\end{equation*}
where
\begin{multline*}
	\cF = \left\{F \, : \, \cC([0,1],\R) \to \R, \, \|F\|_\infty \le 1 \text{ and } |F(\omega_1) - F(\omega_2)| \le \|\omega_1-\omega_2\|_\infty, \, \right. \\ \left. \forall \omega_1,\omega_2 \in \cC([0,1],\R)\right\}.
\end{multline*}
Let $(X_i, i \ge 1)$ be a sequence of independent, centered, square integrable, and identically distributed $\R$-valued random variables, whose common distribution is denoted by $\L$. Define for every $n \ge 1$, $S_n$ as the continuous process on $[0,1]$ obtained by interpolating the process $\oS_n$ defined for every $t \in [0,1]$ by
\begin{equation}
	\oS_n(t) = \frac{1}{\sqrt{n}} \sum_{i=1}^{\ent} X_i\label{eq: processus utilise pour la definition de Sn.}
\end{equation}
between the points $\displaystyle \left\{ \frac{k}{n}, \, k \in \{0,\dots,n\} \right\}$.
For any $x \in \R$, denote by $\{x\}$ the fractional part of $x$.
Then, for every $t \in [0,1]$ and $n \ge 1$ by
\begin{equation}
	S_n(t) = \frac{1}{\sqrt{n}} \sum_{i=1}^{\ent} X_i + \frac{\{nt\}}{\sqrt{n}}X_{\ent+1}.\label{eq: definition de Sn.}
\end{equation}
The first main result is the following theorem, which treats the case where $\L$ is the Rademacher distribution. Its proof is given in Section \ref{s: sous-section preuve Rademacher.}.
\begin{theorem}\label{thm: theoreme principal rademacher.}
	Assume that $\L$ is the Rademacher distribution, i.e.
	\begin{equation*}
		\P(X_1=1) = \P(X_1=-1) = \frac{1}{2}.
	\end{equation*}
	Then, there exists a constant $C>0$ such that for all $n \ge 1$,
	\begin{equation*}
		\dist_{F.M.}\left(\mu_n(\L),B^{br}\right)\le C n^{-\frac{1}{18}} \log(n),
	\end{equation*}
	where $\mu_n(\L)$ is the distribution of $S_{2n}$ conditioned to be $0$ at time $1$, $S_{2n}$ is the process defined at \eqref{eq: definition de Sn.} and $B^{br}$ is the Brownian bridge.
\end{theorem}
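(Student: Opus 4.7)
The plan is to introduce a time $t_n<1$ of the form $k_n/(2n)$, with $1-t_n$ small but tending to zero slowly, and to split the estimation of $\int F\,\d\mu_n(\L)-\int F\,\d B^{br}$ into contributions from $[0,t_n]$ and from $[t_n,1]$. On the first interval, both the Brownian bridge and the law of $S_{2n}$ conditioned on $\oS_{2n}(1)=0$ are absolutely continuous with respect to the unconditioned laws (Brownian motion and the unconditioned walk respectively), with explicit densities
\[
Z_t(\omega)=\frac{1}{\sqrt{1-t}}\exp\!\left(-\frac{\omega(t)^2}{2(1-t)}\right),\qquad Z_n(\omega)=\frac{\P\bigl(\oS_{2n}(1)=0\,\big|\,\oS_{2n}(t_n)=\omega(t_n)\bigr)}{\P(\oS_{2n}(1)=0)}.
\]
The first is obtained from the ratio of Gaussian transition densities, the second from the Markov property. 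A local central limit theorem for the Rademacher walk, sharpened from Stirling's formula applied to central binomial coefficients, gives $Z_n\approx Z_{t_n}$ uniformly on the typical range of $\omega(t_n)$, with an error vanishing with $n$.

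To reduce to a problem genuinely supported on $[0,t_n]$, I would truncate a test function $F\in\cF$ by composing it with the path $\omega^{t_n}$ equal to $\omega$ on $[0,t_n]$ and linearly interpolated from $\omega(t_n)$ to $0$ on $[t_n,1]$. Since $F$ is $1$-Lipschitz, $|F(\omega)-F(\omega^{t_n})|\le 2\sup_{s\in[t_n,1]}|\omega(s)|$, and the expectation of this supremum is $O(\sqrt{1-t_n})$ under both laws (Doob's inequality plus Brownian scaling for the bridge; a reflection/ballot argument, or transfer via the Radon--Nikodym derivative itself, for the conditioned walk).

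After truncation, the remaining quantity $\E[F(\omega^{t_n})Z_n(S_{2n})]-\E[F(\omega^{t_n})Z_{t_n}(B)]$ splits as (i) $\E[F(\omega^{t_n})(Z_n-Z_{t_n})(S_{2n})]$, controlled by the local CLT from step one, plus (ii) $\E[G(S_{2n})]-\E[G(B)]$ with $G(\omega)=F(\omega^{t_n})Z_{t_n}(\omega)$. A direct computation shows that $G$ is Lipschitz on $\cC([0,1],\R)$ with Lipschitz constant $O((1-t_n)^{-1})$: indeed, the derivative of $x\mapsto Z_{t_n}(x)$ peaks at $|x|\sim\sqrt{1-t_n}$ with value of order $(1-t_n)^{-1}$. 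Invoking the Wasserstein-$1$ rate $O(n^{-1/6}\log n)$ for the unconditioned walk against Brownian motion from \cite{CouDec2020}, piece (ii) is bounded by $O(n^{-1/6}(1-t_n)^{-1}\log n)$. Balancing this against the fluctuation term $\sqrt{1-t_n}$ from the truncation, by choosing $1-t_n\sim n^{-1/9}(\log n)^{2/3}$, produces the claimed rate $O(n^{-1/18}\log n)$.

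The hardest points are likely to be, first, producing a local CLT sharp enough that the resulting bound on $Z_n-Z_{t_n}$ remains compatible with the target rate even though $1-t_n$ itself tends to zero like a power of $n$; and second, converting the heuristic sup-norm fluctuation estimate on $[t_n,1]$ for the conditioned walk into a rigorous bound with uniform constants, especially accounting for the discrete nature of $\omega(t_n)$, which takes values on a lattice of spacing $1/\sqrt{n}$ while the relevant fluctuation scale is $\sqrt{1-t_n}$. The fact that $\cF$ imposes both boundedness and $1$-Lipschitz continuity (the Fortet--Mourier class) is what makes the truncation argument work, and is the technical reason for restricting to this distance rather than the full Wasserstein-$1$ distance.
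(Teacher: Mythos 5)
Your proposal follows essentially the same route as the paper: truncate at a time $t_n$ with $1-t_n\sim n^{-1/9}$, use the Radon--Nikodym representation of the bridge and of the conditioned walk with respect to the unconditioned processes on $[0,t_n]$, compare the two density factors via a quantitative local CLT (Stirling for the Rademacher walk), absorb the density factor into a Lipschitz test function of constant $O\bigl((1-t_n)^{-1}\bigr)$ to invoke the $O(n^{-1/6}\log n)$ Wasserstein bound of Coutin--Decreusefond, and control the tail fluctuations on $[t_n,1]$ by $O(\sqrt{1-t_n})$; the two "hard points" you flag are exactly what the paper's Lemmas 4.1--4.4 resolve (via a time-reversal argument for the conditioned fluctuation bound and a Fourier-inversion estimate for the local CLT). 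The only cosmetic deviations are your linear-interpolation truncation $\omega^{t_n}$ (the paper uses the constant extension $\omega_{|t_n}$) and the unnecessary $(\log n)^{2/3}$ factor in the choice of $1-t_n$; the paper also first proves a general lattice-distribution theorem and then specializes to Rademacher, whereas you work with Rademacher directly, but the underlying estimates are identical.
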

Another process converging to the Brownian bridge is the empirical process.
Let $(U_i, \, i \ge 1)$ be a sequence of i.i.d.\ random variables uniformly distributed on $[0,1]$ and let $F$ be their common distribution function.
For every $n \ge 1$, define the cumulative empirical function $F_n$ for every $t \in [0,1]$ by:
\begin{equation*}
	F_n(t) = \frac{1}{n}\sum_{i=1}^n \indic_{\{U_i \le t\}}.\label{eq: definition des fonctions cumulatives.}
\end{equation*}
Define $b_n$ the continuous empirical process on $[0,1]$ obtained by interpolating the empirical process $\overline{b}_n$ defined for every $t \in [0,1]$ by:
\begin{equation}
	\overline{b}_n(t) = \sqrt{n} \left( F_n(t) - F(t) \right)\label{eq: definition du processus empirique.}
\end{equation}
between the points $\displaystyle \left\{\frac{k}{n}, \, k \in \{0,\dots,n\}\right\}$.
By the strong law of large numbers, the sequence of processes $(F_n)$ converges almost surely to $F$ at any single point. Then, the Glivencko--Cantelli theorem gives a uniform law of large numbers:
\begin{equation*}
	\sup_{t \in [0,1]} \left| F_n(t) - F(t) \right| \xrightarrow[n \to \infty]{} 0 \text{ a.s.}
\end{equation*}
Furthermore, Donsker's theorem (see \cite{DonskerGlivenkoCantelli}) asserts that the empirical process $\overline{b}_n$ defined above converges in distribution to the Brownian bridge on the space of c\`ad l\`ag functions on $[0,1]$ equipped with the Skorokhod topology.
The following theorem, which is the second main result of this paper, provides a rate of convergence.
\begin{theorem}\label{thm: theoreme principal GC.}
	Let $(U_i, \, i \ge 1)$ be a sequence of i.i.d. random variables uniformly distributed on $[0,1]$ and $b_n$ the continuous empirical process defined at \eqref{eq: definition du processus empirique.}.
	Then, there exists a constant $C>0$ such that for all $n \ge 1$,
	\begin{equation*}
		\dist_{F.M.}\left(\tilde{\mu}_n(\L),B^{br}\right)\le C n^{-\frac{1}{18}} \log(n),
	\end{equation*}
	where $\tilde{\mu}_n(\L)$ is the distribution $b_n$ and $B^{br}$ is the Brownian bridge.
\end{theorem}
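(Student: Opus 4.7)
My strategy is to realize the continuous empirical process $b_n$ as a conditioned random walk built from i.i.d.\ lattice increments, which places the statement squarely within the scope of the general lattice theorem \ref{thm: theoreme principal poisson moins 1.} announced in the introduction. The proof is then just an identification of distributions followed by an invocation of that result.

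First, I would record the classical correspondence between the uniform empirical process and a conditioned Poisson process. Let $N$ be a standard Poisson process of intensity $1$ on $\R_+$. Conditionally on $\{N(n)=n\}$, the ordered arrival times $T_1 < \cdots < T_n$ on $[0,n]$ are distributed as the order statistics of $n$ i.i.d.\ uniform random variables on $[0,n]$, so that $(T_i/n)_{1 \le i \le n}$ has the same law (up to reordering) as $(U_i)_{1 \le i \le n}$. Consequently, the counting function $t \mapsto N(nt)/n$ conditioned on $N(n)=n$ has the same distribution as the empirical cumulative distribution $F_n$ appearing in the definition of $\overline{b}_n$. Setting $X_i := N(i) - N(i-1) - 1$, the increments $X_i$ are i.i.d., centered, of unit variance, and supported on the lattice $\Z$, with common law $\L := \mathrm{Poi}(1) - 1$. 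Plugging these $X_i$ into \eqref{eq: definition de Sn.} gives $\oS_n(k/n) = (N(k)-k)/\sqrt{n}$, and the event $\{S_n(1)=0\}$ coincides with $\{N(n)=n\}$. On that event, $\oS_n(k/n) = \sqrt{n}(F_n(k/n) - k/n) = \overline{b}_n(k/n)$ at every grid point $k/n$. Since both $b_n$ and $S_n$ are the piecewise linear interpolations of their respective grid values, the laws of $b_n$ and of $S_n$ conditioned on $\{S_n(1)=0\}$ coincide on $\cC([0,1],\R)$. Thus $\tilde\mu_n(\L) = \mu_n(\L)$ for this specific $\L$.

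Finally, since $\L = \mathrm{Poi}(1)-1$ falls within the hypotheses of the lattice theorem stated in Section~\ref{s:section3}, Theorem~\ref{thm: theoreme principal poisson moins 1.} directly delivers the bound $C n^{-1/18} \log n$ on $\dist_{F.M.}(\mu_n(\L), B^{br})$, which is the announced estimate. I do not expect any genuine obstacle in this particular argument: the content of Theorem~\ref{thm: theoreme principal GC.} is precisely the recognition of the empirical process as the right conditioned Poisson walk, after which it reduces to a corollary. The real difficulty is concentrated in the lattice theorem itself, which needs the Radon–Nikodym description of the bridge with respect to Brownian motion on $[0, 1-\delta_n]$, a functional Stein bound on the unconditioned walk (via \cite{CouDec2020}), a local limit estimate to control the density ratio induced by conditioning, and a small-interval continuity bound handling the singularity at $t=1$, with $\delta_n$ optimized to yield the exponent $1/18$.
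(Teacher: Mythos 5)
Your proof follows essentially the same route as the paper's: the paper observes (citing \cite{DonskerGlivenkoCantelli} and \cite{Marckert}) that $b_n$ has the same law as the continuous centered Poisson random walk $S_n$ conditioned on $\{S_n(1)=0\}$, and then invokes Theorem~\ref{thm: theoreme principal poisson moins 1.}. You simply spell out the standard justification of that distributional identity via Poisson arrival times conditioned on $N(n)=n$, a step the paper delegates to the cited references; otherwise the argument is identical.
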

Theorem \ref{thm: theoreme principal GC.} is in fact an immediate corollary of Theorem \ref{thm: theoreme principal poisson moins 1.} just below.
As it is stated in \cite{DonskerGlivenkoCantelli} and in \cite{Marckert}, a simple reasoning proves that
\begin{equation*}
	\left( b_n(t) \right)_{t \in [0,1]} \stackrel{\text{d}}{=} \left( S_n(t) \right)_{t \in [0,1]} \text{ conditioned by $S_n=0$,}
\end{equation*}
where $S_n$ is the continuous centered Poisson random walk. This means that $S_n$ is the continuous process defined for every $t \in [0,1]$ as at \eqref{eq: definition de Sn.}, where for all $i \ge 1$, $X_i + 1$ is a random variable with a Poisson distribution of parameter $1$.
Thus, Theorem \ref{thm: theoreme principal GC.} is proved as soon as Theorem \ref{thm: theoreme principal poisson moins 1.} is established. The proof of Theorem \ref{thm: theoreme principal poisson moins 1.} is given in Section \ref{s: sous-section preuve poisson moins un.}.
\begin{theorem}\label{thm: theoreme principal poisson moins 1.}
	Assume that $\L$ is the Poisson minus one distribution of parameter $1$, that is, for all $i \ge 1$, $P_i := X_i + 1$ is a random variable with a Poisson distribution of parameter $1$.
	Then, there exists a constant $C > 0$ such that for all $n \ge 1$,
	\begin{equation*}
		\dist_{F.M.}\left(\mu_n(\L),B^{br}\right)\le C n^{-\frac{1}{18}} \log(n),
	\end{equation*}
	where $\mu_n(\L)$ is the distribution of $S_{n}$ conditioned to be $0$ at time $1$ and $B^{br}$ is the Brownian bridge.
\end{theorem}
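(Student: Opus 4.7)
The strategy is to follow the blueprint outlined in the introduction: pick a cutoff time $t_n \in (0,1)$ with $t_n \to 1$, localise the problem to $[0,t_n]$ via a $1$-Lipschitz projection, and then compare the conditioned laws on $[0,t_n]$ through their Radon--Nikodym derivatives with respect to the unconditioned laws. Concretely, I would introduce the map $\pi_t : \cC([0,1],\R) \to \cC([0,1],\R)$ which fixes a path on $[0,t]$ and linearly interpolates from $\omega(t)$ to $0$ on $[t,1]$; this map is $1$-Lipschitz, so $F \circ \pi_t \in \cF$ for every $F \in \cF$, and the triangle inequality yields
\begin{equation*}
	\dist_{F.M.}(\mu_n(\L),B^{br}) \le \dist_{F.M.}\bigl((\pi_{t_n})_*\mu_n(\L),(\pi_{t_n})_*B^{br}\bigr) + \cT_n + \cT_\infty,
\end{equation*}
where $\cT_n = \E[\|S_n - \pi_{t_n} S_n\|_\infty \mid S_n(1)=0]$ and $\cT_\infty = \E[\|B^{br} - \pi_{t_n} B^{br}\|_\infty]$. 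Both tails are controlled by the oscillation of a bridge-like process on an interval of length $1 - t_n$ and are of order $\sqrt{1-t_n}$ up to logarithmic factors; the second is the classical Brownian bridge estimate, while the first requires a uniform local limit estimate for the conditioned Poisson walk.

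On $[0,t_n]$, the key identity is
\begin{equation*}
	\frac{d B^{br}}{d B}\bigg|_{\cF_{t_n}}(\omega) = h_\infty(\omega(t_n)), \qquad \frac{d \mu_n(\L)}{d \mathrm{Law}(S_n)}\bigg|_{\cF_{t_n}}(\omega) = h_n(\omega(t_n)),
\end{equation*}
with $h_\infty(x) = (1-t_n)^{-1/2} \exp\!\bigl(-x^2/(2(1-t_n))\bigr)$ the Gaussian density ratio, and $h_n$ its discrete analogue written as $\P(S_n(1)=0 \mid S_n(t_n) = \cdot)/\P(S_n(1)=0)$. Writing $G(\omega) := F(\pi_{t_n}\omega)\, h_\infty(\omega(t_n))$, for each $F \in \cF$ the middle distance splits as
\begin{equation*}
	\E\bigl[F(\pi_{t_n}S_n)(h_n-h_\infty)(S_n(t_n))\bigr] + \bigl(\E[G(S_n)] - \E[G(B)]\bigr).
\end{equation*}
The first summand is bounded via an effective local limit theorem for centred Poisson sums, yielding a polynomial rate in $n$. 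For the second, direct computation gives $\|G\|_\infty = O((1-t_n)^{-1/2})$ and $\|G\|_{\Lip} = O((1-t_n)^{-1})$, so applying the Wasserstein-$1$ rate $O(n^{-1/6}\log n)$ of \cite{CouDec2020} between the unconditioned walk and Brownian motion to the Lipschitz functional $G$ bounds this term by $O((1-t_n)^{-1}\, n^{-1/6}\log n)$.

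Finally, balancing the tail bound $\sqrt{1-t_n}$ against the middle-term bound $(1-t_n)^{-1} n^{-1/6}\log n$ forces the choice $1 - t_n \asymp n^{-1/9}$ and produces the claimed exponent $n^{-1/18}$, with the logarithm inherited from \cite{CouDec2020}. The main obstacle I anticipate is the quantitative control of the conditioned tail $\cT_n$ together with the uniform bound on $h_n - h_\infty$ near the singular time $t=1$: both rest on sharp local limit estimates for Poisson sums, and any slackness there would propagate directly into the exponent $1/18$.
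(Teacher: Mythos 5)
Your sketch follows essentially the route the paper actually takes: restrict to $[0,t_n]$ via a $1$-Lipschitz projection, use the Radon--Nikodym representation of the restricted bridge and restricted conditioned walk, apply the Wasserstein-$1$ rate from \cite{CouDec2020} to a suitably normalised Lipschitz functional (your $G$, the paper's $G_n$), and balance to $1-t_n\asymp n^{-1/9}$ to get the exponent $1/18$. The paper in fact proves a general lattice theorem (Theorem~\ref{thm: theorem principal general.}) with hypotheses stated in terms of $\rho(\L,n)$ and $\tau(\L,n,\delta)$ and then verifies these for the centred Poisson walk using Stirling's formula (giving $\rho(\L,n)\lesssim n^{-17/18}$) and a Taylor expansion of $\phi(u)=\exp(e^{iu}-iu-1)$ (giving $\tau(\L,n,1/12)\lesssim n^{-1/6}$); you collapse this two-stage structure into a single Poisson-specific argument, which is a cosmetic rather than substantive difference. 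Your projection $\pi_t$ (linear interpolation to $0$) also differs only cosmetically from the paper's constant extension $\omega_{|t}$: both are $1$-Lipschitz, both make $F\circ\pi_{t_n}$ depend on the path only through $[0,t_n]$, and both give the same orders.

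There is, however, a genuine gap in your treatment of the conditioned tail $\cT_n=\E[\|S_n-\pi_{t_n}S_n\|_\infty\mid S_n(1)=0]$, which you dismiss as requiring ``a uniform local limit estimate for the conditioned Poisson walk.'' A local limit estimate alone does not bound a conditional expectation of a supremum, and the paper's route is considerably more indirect. First, an exchangeability argument (Lemma~\ref{c: claim pour majorer Aunntn.}: reverse the increments on $[t_n,1]$) reduces the conditioned oscillation over $[t_n,1]$ to the conditioned sup of the walk over $[0,1-t_n]$. Second, the discrete Radon--Nikodym identity (Lemma~\ref{l: lemme egalite esperance de la marche aleatoire conditionnee.}), applied to $H(f)=\|f\|_\infty$, converts this to an \emph{unconditioned} expectation weighted by the ratio $p^{\kappa n}_{t_n}(\cdot)/p^{\kappa n}_1(0)$; the quantitative lattice local CLT (Lemma~\ref{l: lemme convergence des fonctions de probas.}) then shows this ratio is uniformly bounded. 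Only at this last stage does one apply the $O(n^{-1/6}\log n)$ Donsker rate together with Brownian scaling to get $\E[\|S_{n|1-t_n}\|_\infty]\lesssim n^{-1/6}\log n+\sqrt{1-t_n}$. Your sketch would need this chain (or a replacement for it) to close; without it the claimed $\cT_n=O(\sqrt{1-t_n})$ up to logs is unsupported. The remaining hand-waved steps (the bound on $h_n-h_\infty$ and the Lipschitz constant of $G$) are correct as stated and fillable by routine computation.
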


\section{General strategy}\label{s:section3}

\subsection{Definitions and notations}\label{s: sous-section notations.}

\subsubsection{General notations}

For a function $f \in \cC([0,1],\R)$, denote by $f\bt$ the function of $\cC([0,1],\R)$ defined for every $s \in [0,1]$ by
\begin{equation}\label{eq: notation f barre t.}
	f\bt(s) = \left\{
	\begin{array}{ll}
		f(s) & \text{ if } s<t,     \\
		f(t) & \text{ if } s \ge t.
	\end{array}
	\right.
\end{equation}
For $E \subset \R$, the set $-E$ is defined by
\begin{equation*}
	-E = \{ -x, \, x \in E \}
\end{equation*}
For a sequence $(X_i, i \ge 1)$ of independent random variables with some common distribution $\L$, the processes $S_n$ and $\oS_n$ are already defined at \eqref{eq: definition de Sn.} and \eqref{eq: processus utilise pour la definition de Sn.}. We also define the process $\oS^+_n$ for every $t \in [0,1]$ by
\begin{equation}
	\oS^+_n(t) = \frac{1}{\sqrt{n}} \sum_{i=1}^{\ents} X_i\label{eq: processus plus utilise pour la definition de Sn.}
\end{equation}

\subsubsection{Definitions and notations related to the distribution $\L$. }\label{s: sous-section definitions et notations pour la loi L.}

The distribution $\L$ of the sequence $(X_i, i \ge 1)$ is a lattice distribution if there exist two constants $b$ and $h>0$ such that
\begin{equation}
	\P (X_1 \in b+h\Z) = 1, \label{eq: definition lattice distribution.}
\end{equation}
where $b + h\Z = \{b+hz, \, z \in \Z\}$.
The largest $h$ for which \eqref{eq: definition lattice distribution.} holds is the span of the distribution $\L$.
\begin{definition}
	We define $\kappa(\L)$ as the smallest $k \in \N^*$ such that for all $n \ge 1$,
	\begin{equation*}
		\P \left( S_{kn}(1) = 0 \right) > 0.
	\end{equation*}
\end{definition}
For example,
\begin{itemize}
	\item if $\L$ is the Rademacher distribution, $\kappa(\L) = 2$,
	\item if $X_1 = P_1 - 1$ where $P_1$ is a Poisson random variable with parameter $1$, then $\kappa(\L) = 1$.
\end{itemize}
When there is no ambiguity about the distribution in question, we write $\kappa$ instead of $\kappa(\L)$.
We denote by $\mu_n(\L)$ the distribution of $S_{\kappa(\L)n}$ conditioned to be $0$ at time $1$.
For every $t \in [0,1]$ and $x \in \R$, we denote
\begin{equation}
	p^n_t(x) = \P \left( \oS_n(t) = x \right).\label{eq: definition pnt.}
\end{equation}
For every $t \in [0,1]$, we denote by $p_t$ the probability density function of the Brownian motion at time $t$. Thus, for every $x \in \R$,
\begin{equation}
	p_t(x) = \frac{1}{\sqrt{2 \pi t}}\exp \left( - \frac{x^2}{2t} \right).\label{eq: definition pt.}
\end{equation}
For $\L$ a lattice distribution with span $h$, for $n \ge 1$ and $t \in [0,1]$, denote by $\fS_n(t)$ the set
\begin{equation*}
	\fS_n(t) = \left\{ \frac{\ent b + hz}{\sqrt{n}}, \, z \in \Z \right\},
\end{equation*}
where $b$ is such that \eqref{eq: definition lattice distribution.} holds.
Note that for all $n \ge 1$ and $t \in [0,1]$,
\begin{equation*}
	\P \left(\oS_n(t) \in \fS_n(t) \right) = 1.
\end{equation*}
\begin{definition}
	For a lattice distribution $\L$ with span $h$, define $\rho(\L,n)$ by
	\begin{equation*}
		\rho(\L,n) = n^{\frac{1}{18}} \left| \frac{\sqrt{\kappa(\L)n}}{h} p^{\kappa(\L)n}_1(0) - p_1(0) \right|.
	\end{equation*}
\end{definition}
\begin{definition}
	For a distribution $\L$ and for $\displaystyle \delta > \frac{1}{18}$, define $\tau(\L,n,\delta)$ by
	\begin{equation*}
		\tau(\L,n,\delta) = \int_{-n^\delta}^{n^\delta} \left| \phi^{\ens} \left( \frac{u}{\sqrt{n}} \right) - \exp \left( - \frac{s u^2}{2} \right) \right| \mathrm{d}u.
	\end{equation*}
\end{definition}

\subsection{General result}
The third main result of this paper is the following, valid for more general lattice distributions. Its proof is in Section \ref{s: sous-section preuve general.}.
\begin{theorem}\label{thm: theorem principal general.}
	Assume that $\E[X]=0$ and $\displaystyle \E \left[X^2\right] =1$, where $X$ denotes a random variable with distribution $\L$. Furthermore, assume that $\L$ is a lattice distribution.
	Then, for every $\displaystyle \delta > \frac{1}{18}$, there exists $C>0$ such that for all $n \ge 1$,
	\begin{equation*}
		\dist_{F.M.}\left(\mu_n(\L),B^{br}\right)\le C \max\left( n^{-\frac{1}{18}}\log(n), \rho(\L,n), \tau(\L,n,\delta)\right),
	\end{equation*}
	where $\mu_n(\L)$ is the distribution of $S_{\kappa(\L)n}$ conditioned to be $0$ at time $1$ and $B^{br}$ is the Brownian bridge.
\end{theorem}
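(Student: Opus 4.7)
The strategy is to introduce a cut-off time $t_n \in (0,1)$ depending on $n$, to exploit the absolute continuity of the Brownian bridge and of the conditioned random walk with respect to the unconditioned processes on $[0,t_n]$, and to absorb the behaviour on $[t_n,1]$ into an oscillation error. Fix $F \in \cF$. Since $\|F\|_\infty \le 1$ and $F$ is $1$-Lipschitz,
\begin{equation*}
	|F(\omega) - F(\omega\btn)| \le 1 \wedge \|\omega - \omega\btn\|_\infty = 1 \wedge \sup_{s \in [t_n,1]} |\omega(s) - \omega(t_n)|,
\end{equation*}
and so the Fortet--Mourier distance is bounded by
\begin{equation*}
	\left|\E[F(B^{br}\btn)] - \int F(\omega\btn)\,\d\mu_n(\L)(\omega)\right| + \E\bigl[\|B^{br}-B^{br}\btn\|_\infty\bigr] + \int \|\omega-\omega\btn\|_\infty\,\d\mu_n(\L)(\omega).
\end{equation*}
The last two quantities should be of order $\sqrt{1-t_n}$: for the bridge this follows from the explicit law of $B^{br}$ restricted to $[t_n,1]$, and for the conditioned walk from a bridge-type estimate on the final $\kappa n - \lfloor \kappa n t_n \rfloor$ increments, whose sum is constrained to equal $-\oS_{\kappa n}(t_n)$.

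The second step is a change of measure on $[0,t_n]$. The classical Brownian bridge identity
\begin{equation*}
	\E[F(B^{br}\btn)] = \E\left[F(B\btn)\,\frac{p_{1-t_n}(B(t_n))}{p_1(0)}\right]
\end{equation*}
admits a discrete counterpart obtained from the Markov property of the random walk at the deterministic time $t_n$:
\begin{equation*}
	\int F(\omega\btn)\,\d\mu_n(\L)(\omega) = \E\left[F(S_{\kappa n}\btn)\,\frac{p^{\kappa n}_{1-t_n}(-\oS_{\kappa n}(t_n))}{p^{\kappa n}_1(0)}\right],
\end{equation*}
where both $p^{\kappa n}_{\cdot}$ refer to the discrete mass functions \eqref{eq: definition pnt.} and carry the lattice-span $h$ in their scaling.

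The core of the argument is the comparison of these two weighted expectations. The difference splits naturally into two contributions. First, inside the random-walk expectation I would replace the discrete Radon--Nikodym weight by its continuous analog $p_{1-t_n}(-\oS_{\kappa n}(t_n))/p_1(0)$. The denominator error $|(\sqrt{\kappa n}/h)\,p^{\kappa n}_1(0) - p_1(0)|$ is exactly what $\rho(\L,n)/n^{1/18}$ measures, and the pointwise numerator error is treated by Fourier inversion: truncating the Fourier integral at $\pm n^\delta$ produces the contribution $\tau(\L,n,\delta)$, while the tail outside $[-n^\delta,n^\delta]$ is handled by standard characteristic function estimates for lattice distributions. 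Second, what remains is to compare the \emph{same} bounded-Lipschitz functional of $S_{\kappa n}$ versus of $B$, to which the Wasserstein-$1$ rate of \cite{CouDec2020} applies. Since $\|p_{1-t_n}\|_\infty = O((1-t_n)^{-1/2})$ and $\|p_{1-t_n}\|_{\Lip} = O((1-t_n)^{-1})$, the weighted integrand is Lipschitz with constant $O((1-t_n)^{-1})$, and this contribution becomes $O((1-t_n)^{-1}\,n^{-1/6}\log n)$.

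It remains to choose $t_n$ optimally. With $1-t_n = n^{-1/9}$, the oscillation error is $O(\sqrt{1-t_n}) = O(n^{-1/18})$ and the weighted Wasserstein term is $O(n^{1/9-1/6}\log n) = O(n^{-1/18}\log n)$, which together with the local-limit contributions $\rho(\L,n)$ and $\tau(\L,n,\delta)$ produces the announced bound. The two main obstacles I expect are: (i) establishing the oscillation estimate for the conditioned walk on $[t_n,1]$, since its endpoint constraint must substitute for the clean scaling enjoyed by the Brownian bridge; and (ii) carefully tracking the Lipschitz and sup-norm constants of the weighted integrand when invoking the Wasserstein comparison of \cite{CouDec2020}, since the weight becomes singular as $t_n \to 1$ and it is precisely this trade-off that dictates the final exponent $1/18$.
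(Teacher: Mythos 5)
Your proposal follows essentially the same route as the paper: the cut-off $1-t_n = n^{-1/9}$, the Radon--Nikodym representation of both bridges, the three-way comparison replacing the discrete weight by the continuous one (yielding $\rho$ and $\tau$ via Fourier inversion) and then invoking the Wasserstein-1 rate from \cite{CouDec2020} for the weighted $1$-Lipschitz functional, with the trade-off in the exponent exactly as you describe. One small slip: in the discrete change of measure the conditioning on $S_{\kappa n}\btn$ pins down $X_1,\dots,X_{\lceil\kappa n t_n\rceil}$ because of the linear interpolation, so the weight must involve $\oS^+_{\kappa n}(t_n)$ rather than $\oS_{\kappa n}(t_n)$; the paper then handles your obstacle (i) not by a bridge estimate on the last increments but by an exchangeability (time-reversal) argument reducing the conditioned oscillation on $[t_n,1]$ to the one on $[0,1-t_n]$, followed by a fresh application of the change of measure and the local-limit bound on $\sup_a p^{\kappa n}_{t_n}(a)$.
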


\begin{rem}
	The above theorem is meaningful only if one can estimate the rate of convergence of $\rho(\mathcal{L},n)$ towards $0$ as $n \to \infty$ and that of $\tau(\mathcal{L},n,\delta)$ for some $\displaystyle \delta > \frac{1}{18}$.
\end{rem}

\section{Proofs}\label{s: section 4.}

In the proofs, we write $a \lesssim b$ if there exists a constant $C>0$ such that $a \le C b$.
By default, the constant $C$ depends only on the distribution $\L$.
Whenever necessary, we explicitly indicate at the start of a proof the quantities on which $C$ may depend.

\subsection{Proof of Theorem \ref{thm: theorem principal general.}}\label{s: sous-section preuve general.}

For the remainder of this section, we fix a lattice distribution $\L$ with span $h$. Recall that $\kappa(\L)$ is defined in Section \ref{s: sous-section definitions et notations pour la loi L.}. In the following, we write $\kappa$ instead of $\kappa(\L)$.
Recall that the notation $f\bt$ for every function $f \in \cC([0,1],\R)$ is defined in Section \ref{s: sous-section notations.} at \eqref{eq: notation f barre t.}.
As mentioned in Section~\ref{s: section introduction.}, the crucial tool in the proof is the following equality, which follows from the fact that, for any $t<1$, the law of the Brownian bridge on $[0,t]$ is absolutely continuous with respect to the law of standard Brownian motion up to time $t$. For every $F \in \cF$ and for every $t \in (0,1)$.
\begin{equation}
	\E \left[ F \left( B^{br}\bt \right) \right] = \E \left[ F \left( B\bt \right) \frac{p_{1-t}(-B_t)}{p_1(0)} \right],\label{eq: egalite esperance du pont brownien.}
\end{equation}
where $p_s$ is defined for every $s \in [0,1]$ at \eqref{eq: definition pt.}.
A proof of \eqref{eq: egalite esperance du pont brownien.} can be found for instance in Section VIII.3 in \cite{bertoin1996levy}.
An analogous of this equality is naturally expected for the conditioned random walk. This is given in Lemma \ref{l: lemme egalite esperance de la marche aleatoire conditionnee.} below.

\begin{lemma}\label{l: lemme egalite esperance de la marche aleatoire conditionnee.}
	Assume that $\L$ satisfies the assumptions of Theorem \ref{thm: theorem principal general.}.
	Then, for every $F \in \cF$, $t \in (0,1)$ and $n \ge 1$,
	\begin{equation}
		\E \left[ F \left( {S_{\kappa n}}\bt \right) | S_{\kappa n}(1)=0 \right] = \E \left[  F \left( {S_{\kappa n}}\bt \right) \frac{p^{\kappa n}_{1-t}(-\oS^+_{\kappa n}(t))}{p^{\kappa n}_1(0)} \right],\label{eq: egalite esperance de la marche aleatoire conditionnee.}
	\end{equation}
	where for all $n \ge 1$, $S_n$ is defined at \eqref{eq: definition de Sn.}, $\oS^+_n$ at \eqref{eq: processus plus utilise pour la definition de Sn.} and for all $s \in [0,1]$, $p^n_s$ at \eqref{eq: definition pnt.}.
\end{lemma}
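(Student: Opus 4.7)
The plan is straightforward: I will express the conditional expectation as a ratio and recognise the desired Radon--Nikodym factor as the conditional probability that the ``future'' increments bring the walk back to $0$.

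First, the very definition of $\kappa$ guarantees that $p^{\kappa n}_1(0) = \P(S_{\kappa n}(1)=0) > 0$, so that
\begin{equation*}
	\E\left[F(S_{\kappa n}\bt) \,\middle|\, S_{\kappa n}(1)=0\right] = \frac{1}{p^{\kappa n}_1(0)}\, \E\left[F(S_{\kappa n}\bt)\, \indic_{\{S_{\kappa n}(1)=0\}}\right].
\end{equation*}
Next I set $m = \lceil \kappa n t \rceil$ and $\cF_m = \sigma(X_1, \ldots, X_m)$. A careful reading of the interpolation formula \eqref{eq: definition de Sn.} should show that $S_{\kappa n}\bt$ is $\cF_m$-measurable, and one directly reads off $\oS^+_{\kappa n}(t) = \frac{1}{\sqrt{\kappa n}}\sum_{i=1}^{m} X_i \in \cF_m$.

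I then apply the tower property to the splitting $\sqrt{\kappa n}\, S_{\kappa n}(1) = \sum_{i=1}^m X_i + \sum_{i=m+1}^{\kappa n} X_i$. The independence of $(X_{m+1}, \dots, X_{\kappa n})$ from $\cF_m$ together with the i.i.d.\ assumption should yield
\begin{equation*}
	\E\left[\indic_{\{S_{\kappa n}(1)=0\}} \,\middle|\, \cF_m\right] = g(-\oS^+_{\kappa n}(t)), \qquad g(x) := \P\!\left(\frac{1}{\sqrt{\kappa n}}\sum_{i=1}^{\kappa n - m} X_i = x\right).
\end{equation*}
Since $\kappa n \in \N$, the arithmetic identity $\kappa n - m = \kappa n - \lceil \kappa n t \rceil = \lfloor \kappa n(1-t) \rfloor$ combined with the definition \eqref{eq: definition pnt.} of $p^{\kappa n}_{1-t}$ will give $g = p^{\kappa n}_{1-t}$; substituting back and dividing by $p^{\kappa n}_1(0)$ then yields \eqref{eq: egalite esperance de la marche aleatoire conditionnee.}.

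The only real obstacle will be the bookkeeping with floor/ceiling functions: I need to verify carefully that the cut-off at $m = \lceil \kappa n t\rceil$ really makes $S_{\kappa n}\bt$ lie in $\cF_m$ despite the linear interpolation (the key point being that the coefficient $\{\kappa n s\}/\sqrt{\kappa n}$ of $X_{\lfloor \kappa n s\rfloor + 1}$ vanishes whenever $\kappa n s \in \N$), and that the number of summands in the future piece is exactly $\lfloor \kappa n(1-t)\rfloor$, which is what the definition of $p^{\kappa n}_{1-t}$ uses.
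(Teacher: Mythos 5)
Your proposal is correct and follows essentially the same route as the paper: both hinge on the observations that $S_{\kappa n}\bt$ and $\oS^+_{\kappa n}(t)$ are determined by $X_1,\dots,X_{\lceil \kappa n t\rceil}$, that the remaining increments are independent and i.i.d., and on the arithmetic identity $\kappa n - \lceil \kappa n t\rceil = \lfloor \kappa n(1-t)\rfloor$. The paper phrases the argument as a pointwise identity over the finite set of trajectories $\Pi_n$ (exploiting the lattice assumption), whereas you cast it as a tower-property computation with the filtration $\cF_m$; the content is the same, and your floor/ceiling bookkeeping checks out.
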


\begin{proof}
	Let $F \in \cF$, $t \in (0,1)$ and $n \ge 1$.
	Under the assumptions of Lemma \ref{l: lemme egalite esperance de la marche aleatoire conditionnee.}, $\L$ is a lattice distribution. Thus, the set of values in $\cC([0,1],\R)$ that can be taken by ${S_{\kappa n}}$ is finite. Denote it by $\Pi_n$. To prove \eqref{eq: egalite esperance de la marche aleatoire conditionnee.}, it suffices to show that for every $\pi \in \Pi_n$,
	\begin{equation}
		\P \left( {S_{\kappa n}}\bt = \pi\bt | S_{\kappa n}(1)=0 \right) = \P\left({S_{\kappa n}}\bt = \pi\bt\right) \frac{p^{\kappa n}_{1-t}(-\pi([t]))}{p^{\kappa n}_1(0)},\label{eq: equation bypass conditionning 1.}
	\end{equation}
	where $\displaystyle [t] = \frac{\lceil \kappa nt \rceil}{\kappa n}$.
	Now, proving \eqref{eq: equation bypass conditionning 1.} is the same as proving the following:
	\begin{equation*}
		\P \left( \oS_{\kappa n}(1-t) = -\pi([t]) \right) =: p^{\kappa n}_{1-t}(-\pi([t])) = \P \left( S_{\kappa n}(1) = 0 | {S_{\kappa n}}\bt = \pi\bt \right).
	\end{equation*}
	However, this equality is almost obvious. Indeed, using the definition of $S_{\kappa n}$ and the fact that the knowledge of ${S_{\kappa n}}\bt$ is equivalent to the knowledge of $X_i$ for $i \in \{1,\dots,\lceil \kappa nt \rceil \}$,
	\begin{align*}
		\P \left( S_{\kappa n}(1) = 0 | {S_{\kappa n}}\bt = \pi\bt \right) & = \P \left( \sum_{i = \lceil \kappa nt \rceil + 1}^{\kappa n} X_i = - \pi([t]) \right)                                                    \\
		                                                                   & = \P \left( \sum_{i = 1}^{\lfloor \kappa n(1-t) \rfloor} X_i = - \pi([t]) \right) \text{ since the family $(X_i, \, i \ge 1)$ is i.i.d.,} \\
		                                                                   & =\P \left( \oS_{\kappa n}(1-t) = -\pi([t]) \right),
	\end{align*}
	which concludes the proof.
\end{proof}
Before proceeding to the proof of Theorem~\ref{thm: theorem principal general.}, we state several auxiliary lemmas that will be used throughout the proof.
\begin{lemma}\label{c: claim pour majorer Aunntn.}
	For every $n \ge 1$,
	\begin{equation}
		\E \left[ \|S_{\kappa n} - {S_{\kappa n}}_{|t_n} \|_\infty | S_{\kappa n}(1)=0 \right] \le 2 \E \left[ \|{S_{\kappa n}}_{|1-t_n} \|_\infty | S_{\kappa n}(1)=0 \right],\label{eq: claim pour majorer Aunntn.}
	\end{equation}
	where $\displaystyle t_n = 1 - n^{-\frac{1}{9}}$.
\end{lemma}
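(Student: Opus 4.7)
The plan is to combine a triangle inequality with a time-reversal symmetry of the conditioned walk. First, I would unpack the left-hand side using the definition of ${S_{\kappa n}}_{|t_n}$: this function agrees with $S_{\kappa n}$ on $[0,t_n)$ and is constant equal to $S_{\kappa n}(t_n)$ on $[t_n,1]$, so
\begin{equation*}
\|S_{\kappa n} - {S_{\kappa n}}_{|t_n}\|_\infty = \sup_{s \in [t_n,1]} |S_{\kappa n}(s) - S_{\kappa n}(t_n)|.
\end{equation*}
A triangle inequality, combined with the trivial bound $|S_{\kappa n}(t_n)| \le \sup_{s \in [t_n,1]} |S_{\kappa n}(s)|$, then yields the pathwise inequality
\begin{equation*}
\|S_{\kappa n} - {S_{\kappa n}}_{|t_n}\|_\infty \le 2 \sup_{s \in [t_n,1]} |S_{\kappa n}(s)|.
\end{equation*}

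The heart of the proof is a time-reversal argument: conditionally on $\{S_{\kappa n}(1)=0\}$, I want to show that $(S_{\kappa n}(1-s))_{s \in [0,1]}$ and $(-S_{\kappa n}(s))_{s \in [0,1]}$ have the same law as random elements of $\cC([0,1],\R)$. This rests on the exchangeability of the i.i.d.\ increments $(X_i)_{1 \le i \le \kappa n}$ under the symmetric conditioning event $\{X_1 + \dots + X_{\kappa n} = 0\}$: the reversed tuple $(X_{\kappa n}, X_{\kappa n-1}, \dots, X_1)$ shares the conditional joint law of $(X_1, \dots, X_{\kappa n})$. Evaluating at a node $s = k/(\kappa n)$, the identity $\sum_{i=1}^{\kappa n -k} X_i = -\sum_{j=1}^{k} X_{\kappa n - j + 1}$, valid on the conditioning event, matches $S_{\kappa n}(1-s)$ with the negative of the walk built from the reversed increments. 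Both processes are linear interpolations between their values at the symmetric node set $\{k/(\kappa n) : 0 \le k \le \kappa n\}$, so the finite-dimensional distributional identity at the nodes upgrades to an equality in law of continuous paths.

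Substituting $u = 1 - s$ and noting that absolute values remove the sign, this distributional identity yields
\begin{equation*}
\sup_{s \in [t_n,1]} |S_{\kappa n}(s)| \stackrel{\text{d}}{=} \sup_{u \in [0,1-t_n]} |S_{\kappa n}(u)| = \|{S_{\kappa n}}_{|1-t_n}\|_\infty
\end{equation*}
conditionally on $\{S_{\kappa n}(1)=0\}$. Taking conditional expectations in the pathwise bound and substituting this identity delivers the claim. I do not expect any step to be a real obstacle; the only item worth flagging is that $t_n$ need not coincide with a node of the interpolation, but since the distributional equality is stated at the level of continuous sample paths this causes no difficulty.
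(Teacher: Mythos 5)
Your proof is correct, and it takes a genuinely different route from the paper's. The paper also starts by reducing to the supremum on $[t_n,1]$, but it does so by splitting into cases at $[t_n] = \lceil \kappa n t_n \rceil/(\kappa n)$, producing a sum of two terms (the increment over $[[t_n],1]$ plus a boundary term involving $X_{\lfloor \kappa n t_n\rfloor+1}$), each of which is then matched by a block-shift argument: given $S_{\kappa n}(1)=0$, the tuple $(X_{\kappa n [t_n]+1},\dots,X_{\kappa n})$ has the same conditional law as $(X_1,\dots,X_{\kappa n-\kappa n [t_n]})$. You instead use a cruder two-step triangle inequality giving the pathwise bound $\|S_{\kappa n}-{S_{\kappa n}}_{|t_n}\|_\infty \le 2\sup_{s\in[t_n,1]}|S_{\kappa n}(s)|$, and then a single time-reversal symmetry: conditionally on $S_{\kappa n}(1)=0$, $(S_{\kappa n}(1-s))_s \stackrel{\text d}{=}(-S_{\kappa n}(s))_s$ as $\cC([0,1],\R)$-valued elements, because the conditioning event is permutation-invariant and the interpolation nodes are symmetric. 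Both approaches rest on the same underlying fact (exchangeability of the i.i.d.\ increments under the symmetric conditioning) and both deliver the bound with the constant $2$, but yours gets the $2$ from the elementary inequality $|a-b|\le|a|+|b|$ and avoids handling the boundary of the interpolation interval, while the paper's shift argument is slightly more direct about where the increments live. Your reversal argument uses in an essential way that the terminal conditioning value is $0$ (so that the reversed-index sum picks up a clean minus sign); the paper's shift argument uses the same fact just as essentially (since it relies on exchangeability of the conditioned tuple). Either argument is valid for the lemma as stated.
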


\begin{lemma}\label{l: lemme convergence des fonctions de probas.}
	Assume that $\E[X]=0$ and $\displaystyle \E \left[X^2\right] =1$, where $X$ denotes a random variable with distribution $\L$. Furthermore, assume that $\L$ is a lattice distribution with span $h$. Then, for every $\displaystyle \delta > \frac{1}{18}$ and for every $\displaystyle \eta < \min(2\delta,1)$, there exists $C>0$ such that for all $n \ge 1$, for all $\displaystyle s \in \left[ n^{-\eta},1 \right]$,
	\begin{equation*}
		\sup_{a \in \fS_n(s)} \left|\frac{\sqrt{n}}{h} p^n_s(a) - p_s(a)\right| \le C \max \left( n^{-\frac{1}{18}}, \tau(\L,n,\delta)\right).
	\end{equation*}
\end{lemma}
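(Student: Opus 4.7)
The plan is the classical Fourier route to a quantitative local central limit theorem for lattice distributions. Since $X_1$ takes values in $b+h\Z$, the Fourier inversion formula for integer-valued variables applied to $(\sqrt n\,\oS_n(s)-\lfloor ns\rfloor b)/h$, after the rescaling $w=v\sqrt n$, yields
\begin{equation*}
\frac{\sqrt n}{h}\,p^n_s(a)=\frac{1}{2\pi}\int_{-\pi\sqrt n/h}^{\pi\sqrt n/h} e^{-iwa}\,\phi^{\lfloor ns\rfloor}\bigl(w/\sqrt n\bigr)\,\d w,
\end{equation*}
where $\phi$ is the characteristic function of $X_1$. Combining with the representation $p_s(a)=\frac{1}{2\pi}\int_{\R} e^{-iwa}e^{-sw^2/2}\,\d w$, subtracting, pulling the modulus inside, and taking the supremum over $a\in\fS_n(s)$ give
\begin{equation*}
\sup_{a\in\fS_n(s)}\Big|\tfrac{\sqrt n}{h}p^n_s(a)-p_s(a)\Big|\le \frac{1}{2\pi}\int_{|w|\le\pi\sqrt n/h}\big|\phi^{\lfloor ns\rfloor}(w/\sqrt n)-e^{-sw^2/2}\big|\,\d w+\frac{1}{2\pi}\int_{|w|>\pi\sqrt n/h} e^{-sw^2/2}\,\d w.
\end{equation*}

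I would then split the first integral at the thresholds $|w|=n^\delta$ and $|w|=\epsilon_0\sqrt n$, so the right-hand side decomposes into four pieces: a central piece over $|w|\le n^\delta$, which equals $\tau(\L,n,\delta)/(2\pi)$ by definition; two middle pieces over $n^\delta<|w|\le\pi\sqrt n/h$; and the far Gaussian tail beyond $\pi\sqrt n/h$. It thus remains to show that each of the three non-central pieces is bounded by $n^{-1/18}$ uniformly in $s\in[n^{-\eta},1]$; the conclusion then follows by taking the maximum and absorbing constants into $C$.

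The tail estimates rest on two standard facts about~$\phi$. First, $\E[X]=0$ and $\E[X^2]=1$ give the Taylor expansion $\phi(u)=1-u^2/2+o(u^2)$, hence constants $\epsilon_0,c>0$ with $|\phi(u)|\le e^{-cu^2}$ on $[-\epsilon_0,\epsilon_0]$. Second, since $h$ is the true span, $|\phi|<1$ on $(-\pi/h,\pi/h)\setminus\{0\}$, so by continuity $q:=\sup_{\epsilon_0\le|u|\le\pi/h}|\phi(u)|<1$. On the slice $\{n^\delta<|w|\le\epsilon_0\sqrt n\}$ these give $|\phi(w/\sqrt n)|^{\lfloor ns\rfloor}\le e^{-csw^2/2}$ for $n$ large (uniformly in $s$), and the corresponding integral, together with the Gaussian piece $\int_{|w|>n^\delta}e^{-sw^2/2}\,\d w$, is super-polynomially small as soon as $sn^{2\delta}\to\infty$, which is guaranteed by $\eta<2\delta$. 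On the slice $\{\epsilon_0\sqrt n\le|w|\le\pi\sqrt n/h\}$ one has $|\phi(w/\sqrt n)|^{\lfloor ns\rfloor}\le q^{ns/2}$, whose integral is bounded by $(2\pi\sqrt n/h)\,q^{ns/2}$, super-polynomially small as soon as $ns\to\infty$, which is guaranteed by $\eta<1$. The far Gaussian tail $\int_{|w|>\pi\sqrt n/h}e^{-sw^2/2}\,\d w$ is treated by the same Gaussian tail estimate.

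The main obstacle is precisely the uniformity in $s\in[n^{-\eta},1]$: each of the three small pieces must be bounded by $n^{-1/18}$ simultaneously over this entire interval. The twin constraints $\eta<2\delta$ and $\eta<1$ in the statement are exactly what force both $sn^{2\delta}$ and $ns$ to tend to infinity uniformly in~$s$, making the Gaussian- and geometric-type tails decay faster than any polynomial and thus a fortiori dominated by $n^{-1/18}$. Assembling the four contributions yields the bound $C\max(n^{-1/18},\tau(\L,n,\delta))$ announced in the lemma.
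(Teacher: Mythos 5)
Your proposal is correct and takes essentially the same route as the paper: Fourier inversion for the lattice law, the matching Gaussian inversion for $p_s$, subtraction, then a split of the characteristic-function integral into a central piece (recognized as $\tau(\L,n,\delta)$), an intermediate Gaussian-decay piece, a geometric piece near the edge of the inversion range, and the Gaussian tail, with $\eta<2\delta$ and $\eta<1$ delivering uniformity in $s$. The only cosmetic difference is that the paper caps the central cutoff at $n^{\delta'}$ with $\delta'=\min(\delta,1/2)$ so that the three intervals remain nested for every $\delta>\tfrac1{18}$, whereas you split directly at $n^\delta$; when $\delta\ge 1/2$ your middle slice can be empty and the central piece should be truncated at $\pi\sqrt n/h$, but since that truncated integral is still dominated by $\tau(\L,n,\delta)$, the argument goes through unchanged.
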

Recall the definitions given in Section \ref{s: sous-section notations.}.
For every $t \in [0,1]$, define the function $L_t \, : \, \R \to \R$ for all $x \in \R$ by
\begin{equation}
	L_t(x) = \frac{p_{1-t}(-x)}{p_1(0)}.\label{eq: definition de Lt.}
\end{equation}
For every $t \in [0,1]$ and $n \in \N^*$, define similarly the function $L^n_t \, : \, \R \to \R$ for all $x \in \R$ by
\begin{equation}
	L^n_t(x) = \frac{p^{\kappa n}_{1-t}(-x)}{p^{\kappa n}_1(0)}.\label{eq: definition de Lnt.}
\end{equation}

\begin{lemma}\label{l: lemme implication des hypotheses.}
	Assume that $\L$ satisfies the assumptions of Theorem \ref{thm: theorem principal general.}. Then, for every $\delta > \frac{1}{18}$, there exists $C>0$ such that for all $n \ge 1$ and all $\displaystyle t \in \left[ 0, 1-n^{-\frac{1}{9}} \right]$,
	\begin{equation}
		\sup_{a \in -\fS_{\kappa n}(1-t)} |L_t(a)-L^n_t(a)| \le C \max \left( n^{-\frac{1}{18}}, \rho(\L,n), \tau(\L,n,\delta)\right).\label{eq: lemme implication des hypotheses.}
	\end{equation}
	Thus, for every $\delta > \frac{1}{18}$, there exists $C>0$ such that for all $n \ge 1$ and all $\displaystyle t \in \left[ 0, 1-n^{-\frac{1}{9}} \right]$,
	\begin{equation}
		\E \left[ \left| L_t\left(\oS^+_{\kappa n}(t)\right) - L^n_t\left(\oS^+_{\kappa n}(t)\right) \right| \right] \le C \max \left( n^{-\frac{1}{18}}, \rho(\L,n), \tau(\L,n,\delta)\right).\label{eq: hypothese esperance de la difference des fonctions L et Ln.}
	\end{equation}
\end{lemma}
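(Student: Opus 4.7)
The plan is to prove the uniform estimate (\ref{eq: lemme implication des hypotheses.}) first and then deduce the expectation bound (\ref{eq: hypothese esperance de la difference des fonctions L et Ln.}) almost immediately. Fix $a\in -\fS_{\kappa n}(1-t)$ and introduce the rescaled lattice density $\tilde p^n_s(x):=\frac{\sqrt{\kappa n}}{h}\,p^{\kappa n}_s(x)$, so that the factors $h/\sqrt{\kappa n}$ cancel top and bottom in $L^n_t$ and
\begin{equation*}
L_t(a) - L^n_t(a) \;=\; \frac{p_{1-t}(-a)\bigl(\tilde p^n_1(0) - p_1(0)\bigr) \;+\; p_1(0)\bigl(p_{1-t}(-a) - \tilde p^n_{1-t}(-a)\bigr)}{p_1(0)\,\tilde p^n_1(0)}.
\end{equation*}
The denominator stays bounded below for $n$ large: applying Lemma~\ref{l: lemme convergence des fonctions de probas.} at $s=1$ gives $\tilde p^n_1(0)\to p_1(0)>0$, and the corresponding factor is absorbed into the constant $C$.

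For the first numerator term, the restriction $t\le 1-n^{-1/9}$ yields $p_{1-t}(-a)\le p_{1-t}(0) \le n^{1/18}/\sqrt{2\pi}$, while the very definition of $\rho(\L,n)$ reads $|\tilde p^n_1(0)-p_1(0)| = n^{-1/18}\rho(\L,n)$; the two powers $n^{\pm 1/18}$ cancel, leaving a bound of order $\rho(\L,n)$. For the second numerator term, since $\delta>1/18$ one can choose $\eta\in (1/9, \min(2\delta,1))$, and then $1-t\in [n^{-1/9},1]\subset [n^{-\eta},1]$ with $-a\in \fS_{\kappa n}(1-t)$, so Lemma~\ref{l: lemme convergence des fonctions de probas.} (applied with $n$ replaced by $\kappa n$, $\kappa$ being a fixed constant) gives $|p_{1-t}(-a)-\tilde p^n_{1-t}(-a)| \lesssim \max(n^{-1/18},\tau(\L,n,\delta))$ uniformly in $a$. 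Combining the two estimates proves (\ref{eq: lemme implication des hypotheses.}).

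For (\ref{eq: hypothese esperance de la difference des fonctions L et Ln.}), note that $\oS^+_{\kappa n}(t)$ takes values almost surely in $\frac{1}{\sqrt{\kappa n}}(\lceil \kappa n t\rceil b + h\Z)$. Because $\lceil \kappa n t\rceil + \lfloor \kappa n(1-t)\rfloor = \kappa n$ and $\kappa n b\in h\Z$ by the very definition of $\kappa(\L)$, this set coincides with $-\fS_{\kappa n}(1-t)$. Consequently
\begin{equation*}
\E\bigl[\bigl|L_t(\oS^+_{\kappa n}(t)) - L^n_t(\oS^+_{\kappa n}(t))\bigr|\bigr] \;\le\; \sup_{a\in -\fS_{\kappa n}(1-t)} |L_t(a) - L^n_t(a)|,
\end{equation*}
and (\ref{eq: hypothese esperance de la difference des fonctions L et Ln.}) follows.

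The main technical point I anticipate is the tight book-keeping of powers of $n$: the blow-up $p_{1-t}(0) = O(n^{1/18})$ as $t\to 1$ must be exactly compensated by the $n^{-1/18}$ decay encoded in $\rho(\L,n)$, which is precisely what forces the cutoff $t\le 1-n^{-1/9}$ imposed in the statement. A secondary delicate point is the lattice-alignment check that the image of $\oS^+_{\kappa n}(t)$ coincides with the support $\fS_{\kappa n}(1-t)$ used in Lemma~\ref{l: lemme convergence des fonctions de probas.}; this rests on the divisibility condition $\kappa n b\in h\Z$ inherent to the definition of $\kappa(\L)$.
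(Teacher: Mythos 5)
Your proof is correct and follows essentially the same strategy as the paper: decompose $L_t(a)-L^n_t(a)$ into a term controlled by $\rho(\L,n)$ (via the cancellation of $n^{\pm 1/18}$) and a term controlled by Lemma~\ref{l: lemme convergence des fonctions de probas.}, then verify the lattice alignment $-\oS^+_{\kappa n}(t)\in\fS_{\kappa n}(1-t)$ a.s.\ using $\kappa n b\in h\Z$ to pass from the pointwise supremum to the expectation. The only cosmetic differences are that you write the decomposition as a single fraction where the paper inserts an intermediate term and invokes the triangle inequality, and that the paper deduces $\frac{\sqrt{\kappa n}}{h}p^{\kappa n}_1(0)\to p_1(0)$ directly from the local CLT (Durrett, Thm.~3.5.3) rather than from Lemma~\ref{l: lemme convergence des fonctions de probas.}, whose bound alone does not by itself yield convergence.
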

We now turn to the proof of Theorem \ref{thm: theorem principal general.}. The proofs of Lemma \ref{c: claim pour majorer Aunntn.}, Lemma \ref{l: lemme convergence des fonctions de probas.} and Lemma \ref{l: lemme implication des hypotheses.} are provided immediately afterwards.
\begin{proof}[Proof of Theorem \ref{thm: theorem principal general.}]
	Assume that $\L$ satisfies the assumptions of Theorem \ref{thm: theorem principal general.}.
	Fix $\displaystyle \delta > \frac{1}{18}$.
	In this proof, we write $a \lesssim b$ if there exists a constant $C>0$ depending only on $\L$ and $\delta$ such that $a \le C b$.
	We begin by the following. For every $\displaystyle t \in \left(0,1\right)$,
	\begin{align*}
		\dist_{F.M.} \left( \mu_n(\L),B^{br} \right) & = \sup_{F \in \cF} \left( \E \left[ F \left( S_{\kappa n} \right) | S_{\kappa n} (1) = 0 \right] - \E \left[ F \left( B^{br} \right) \right] \right) \\
		                                             & \le \sup_{F \in \cF} A_1(F,n,t) + \sup_{F \in \cF} A_2(F,n,t) + \sup_{F \in \cF} A_3(F,t),
	\end{align*}
	where
	\begin{align*}
		A_1(F,n,t) & = \E \left[ F \left( S_{\kappa n} \right) | S_{\kappa n} (1) = 0 \right] - \E \left[ F \left( {S_{\kappa n}}\bt \right) | S_{\kappa n} (1) = 0 \right], \\
		A_2(F,n,t) & = \E \left[ F \left( {S_{\kappa n}}\bt \right) | S_{\kappa n} (1) = 0 \right] - \E \left[ F \left( B^{br}\bt \right) \right],                           \\
		A_3(F,t)   & = \E \left[ F \left( B^{br}\bt \right) \right] - \E \left[ F \left( B^{br} \right) \right].
	\end{align*}
	We complete the proof by bounding from above each term $A_1(F,n,t)$, $A_2(F,n,t)$ and $A_3(F,t)$ by $\displaystyle C \max\left( n^{-\frac{1}{18}}\log(n), \rho(\L,n), \tau(\L,n,\delta)\right)$ for every $n \ge 1$ and for some constant $C>0$ independent of $F$, $t$ and $n$.
	The variable $t$, in turn, depends on $n$, and in order to obtain the most optimal upper bound, we set, for each $n \ge 1$,
	\begin{equation}
		t_n=1-n^{-\frac{1}{9}}.\label{eq: preuve generale definition tn.}
	\end{equation}
	We henceforth fix $F \in \cF$ for the rest of the proof. With $F$ fixed, we simplify the notation by writing $A_1(n,t)$, $A_2(n,t)$, and $A_3(t)$ in place of $A_1(F,n,t)$, $A_2(F,n,t)$, and $A_3(F,t)$.
	We begin by the bound for $A_3(t_n)$.
	Since $F$ is $1$-Lipschitz, we have
	\begin{equation}
		A_3(t_n) \le \E \left[ \left\|B_{|t_n}^{br} - B^{br}\right\|_\infty \right] = \E \left[ \sup_{s \in [t_n,1]} \left| B^{br}_{t_n} - B^{br}_s \right| \right].\label{eq: preuve generale majoration.}
	\end{equation}
	Recall that $t \mapsto B^{br}_t, \, t\in[0,1]$ has the same distribution as $t \mapsto B_t-tB_1, \, t\in[0,1]$. For a reference, one can for example see (8.4.3) in \cite{Durrett2019}.
	This, and \eqref{eq: preuve generale majoration.}, give
	\begin{equation}
		A_3(t_n) \le \E \left[ \sup_{s \in [t_n,1]} \left| B_{t_n} - B_s \right| \right] + (1-t_n) \E \left[ |B_1| \right].\label{eq: preuve generale majoration 2.}
	\end{equation}
	Then, using the classic properties of Brownian motion, namely the properties of stationary increments and of self-similarity, we get that $\displaystyle \sup_{s \in [t_n,1]} \left| B_{t_n} - B_s \right|$ and $\displaystyle \left| 1-t_n \right|^{\frac{1}{2}} \sup_{s \in [0,1]} |B_s|$ have the same distribution. Thus, since $\E \left[ \sup_{s \in [0,1]} \left| B_s \right| \right] < \infty$, for every $n \ge 1$, the following inequality holds:
	\begin{equation}
		\E \left[ \sup_{s \in [t_n,1]} \left| B_{t_n} - B_s \right| \right] \lesssim (1-t_n)^{\frac{1}{2}}.\label{eq: preuve generale majoration 3.}
	\end{equation}
	Finally, combining \eqref{eq: preuve generale majoration 3.} and \eqref{eq: preuve generale majoration 2.} gives that for all $n \ge 1$,
	\begin{equation*}
		A_3(t_n) \lesssim (1-t_n)^{\frac{1}{2}} \log(n).
	\end{equation*}
	With the definition of $t_n$ given at \eqref{eq: preuve generale definition tn.}, we obtain for all $n \ge 1$,
	\begin{equation*}
		A_3(t_n) \lesssim n^{-\frac{1}{18}} \log(n) \le \max\left( n^{-\frac{1}{18}}\log(n), \rho(\L,n), \tau(\L,n,\delta)\right).
	\end{equation*}
	The next step is the bound for $A_1(n,t_n)$.
	Since $F$ is $1$-Lipschitz, we have
	\begin{equation*}
		A_1(n,t_n) \le \E \left[ \|S_{\kappa n} - {S_{\kappa n}}_{|t_n} \|_\infty | S_{\kappa n}(1)=0 \right].
	\end{equation*}
	Applying Lemma \ref{c: claim pour majorer Aunntn.}, we get for all $n \ge 1$,
	\begin{equation}
		A_1(n,t_n) \le 2 \E \left[ \|{S_{\kappa n}}_{|1-t_n} \|_\infty | S_{\kappa n}(1)=0 \right].\label{eq: preuve generale majoration A1 1.}
	\end{equation}
	Now, to compare the right-hand side of the above inequality with $\E \left[ \|{S_{\kappa n}}_{|1-t_n} \|_\infty \right]$, we use Lemma \ref{l: lemme egalite esperance de la marche aleatoire conditionnee.} with the application $H \, : \, f \mapsto \|f\|_\infty$  defined on $\cC([0,1],\R)$. Note that $H \in \cF$. This yields
	\begin{align}
		\E \left[ \|{S_{\kappa n}}_{|1-t_n} \|_\infty | S_{\kappa n}(1)=0 \right] & = \E \left[ \|{S_{\kappa n}}_{|1-t_n} \|_\infty \frac{p^{\kappa n}_{t_n}\left( -\oS^+_{\kappa n}(1-t_n) \right)}{p^{\kappa n}_1(0)} \right] \nonumber                                  \\
		                                                                          & \le \E \left[ \|{S_{\kappa n}}_{|1-t_n} \|_\infty \frac{\sup_{a \in \fS_{\kappa n}(t_n)}p^{\kappa n}_{t_n}(a)}{p^{\kappa n}_1(0)} \right].\label{eq: preuve generale majoration A1 2.}
	\end{align}
	Recall that for every $s \in [0,1]$, $p_s$ is the probability density function of the Brownian motion at time $s$. By Lemma \ref{l: lemme convergence des fonctions de probas.}, for every $n \ge 1$, we have
	\begin{equation*}
		\sup_{s \in \left[\frac{1}{2},1\right]} \sup_{a \in \fS_n(s)} \left|\frac{\sqrt{n}}{h} p^n_s(a) - p_s(a)\right| \lesssim \max \left( 1, \tau(\L,n,\delta) \right).
	\end{equation*}
	Furthermore, since $\displaystyle \frac{\sqrt{n}}{h} p^n_1(0) \xrightarrow[n \to \infty]{} p_1(0)$, for every $n$ sufficiently large,
	\begin{equation*}
		\frac{\sqrt{n}}{h} p^{n}_1(0) \ge \frac{p_1(0)}{2}\cdotp
	\end{equation*}
	Thus, for every $n$ sufficiently large such that the inequality just above holds and such that $\displaystyle t_n \ge \frac{1}{2}$, we get
	\begin{align}
		\frac{\sup_{a \in \fS_{\kappa n}(t_n)}p^{\kappa n}_{t_n}\left( a \right)}{p^{\kappa n}_1(0)} & \le \frac{2}{p_1(0)} \sup_{a \in \fS_{\kappa n}(t_n)} \frac{\sqrt{\kappa n}}{h} p^{\kappa n}_{t_n}(a) \nonumber                                                                                                  \\
		                                                                                             & \le \frac{2}{p_1(0)} \left( \sup_{a \in \fS_{\kappa n}(t_n)} \left|\frac{\sqrt{\kappa n}}{h}p^{\kappa n}_{t_n}(a) - p_{t_n} (a)\right| + \sup_{a \in \R} p_{t_n}(a) \right) \nonumber                            \\
		                                                                                             & \lesssim \frac{2}{p_1(0)} \max \left( 1, \tau(\L,n,\delta) \right) + \frac{2}{\sqrt{t_n}} \le \frac{2}{p_1(0)} \max \left( 1, \tau(\L,n,\delta) \right) + 2\sqrt{2}.\label{eq: preuve generale majoration A1 3.}
	\end{align}
	Hence, combining \eqref{eq: preuve generale majoration A1 1.}, \eqref{eq: preuve generale majoration A1 2.} and \eqref{eq: preuve generale majoration A1 3.}, for every $n$ sufficiently large, we obtain the following bound,
	\begin{equation}
		A_1(n,t_n) \lesssim \max \left( 1, \tau(\L,n,\delta) \right) \E \left[ \|{S_{\kappa n}}_{|1-t_n} \|_\infty \right].\label{eq: preuve generale majoration A1 4.}
	\end{equation}
	It remains to bound from above $\E \left[ \|{S_{\kappa n}}_{|1-t_n} \|_\infty \right]$. To this aim, we use that $S_{\kappa n}$ converges to the Brownian motion $B$. Indeed, writing
	\begin{equation*}
		\E \left[ \|{S_{\kappa n}}_{|1-t_n} \|_\infty \right] \le \E \left[ \|{S_{\kappa n}}_{|1-t_n} \|_\infty \right] - \E \left[ \|B_{|1-t_n} \|_\infty \right] + \E \left[ \|B_{|1-t_n} \|_\infty \right],
	\end{equation*}
	we can use Theorem 3.4 in \cite{CouDec2020}. Since the function defined for every $f \in \cC([0,1],\R)$ by $f \mapsto \|f_{|1-t_n}\|_\infty$ is $1$-Lipschitz, we get that
	\begin{multline}
		\E \left[ \|{S_{\kappa n}}_{|1-t_n} \|_\infty \right] - \E \left[ \|B_{|1-t_n} \|_\infty \right] \\ \le \sup_{G \in \mathrm{Lip}_1(\cC([0,1],\R))} \E \left[ G(S_{\kappa n}) \right] - \E \left[ G(B) \right] \lesssim n^{-\frac{1}{6}} \log(n),\label{eq: preuve generale majoration A1 5.}
	\end{multline}
	where
	\begin{multline*}
		\mathrm{Lip}_1 \left(\cC([0,1],\R)\right) = \left\{G \, : \, \cC([0,1],\R) \to \R, \, |G(\omega_1) - G(\omega_2)| \le \|\omega_1-\omega_2\|_\infty, \, \right. \\ \left. \forall \omega_1,\omega_2 \in \cC([0,1],\R)\right\}.
	\end{multline*}
	Following the end of the proof for the bound of $A_3(t_n)$, i.e.\ using the same reasoning as to get \eqref{eq: preuve generale majoration 3.} and the definition of $t_n$ given at \eqref{eq: preuve generale definition tn.}, we get
	\begin{equation}
		\E \left[ \|B_{|1-t_n} \|_\infty \right] = \E \left[ \sup_{s \in [0,1-t_n]} |B_s-B_0| \right] \lesssim (1-t_n)^{\frac{1}{2}} \le n^{-\frac{1}{18}}.\label{eq: preuve generale majoration A1 6.}
	\end{equation}
	Hence, by \eqref{eq: preuve generale majoration A1 4.}, \eqref{eq: preuve generale majoration A1 5.} and \eqref{eq: preuve generale majoration A1 6.}, for all $n \ge 1$,
	\begin{equation*}
		A_1(n,t_n) \lesssim n^{-\frac{1}{18}} \log(n) \max \left( 1, \tau(\L,n,\delta) \right) \lesssim \max\left( n^{-\frac{1}{18}}\log(n), \rho(\L,n), \tau(\L,n,\delta)\right).
	\end{equation*}
	It remains to deal with $A_2(n,t_n)$.
	Recall the definition of $L_t$ and $L^n_t$ given at \eqref{eq: definition de Lt.} and \eqref{eq: definition de Lnt.}.
	By \eqref{eq: egalite esperance du pont brownien.} and Lemma \ref{l: lemme egalite esperance de la marche aleatoire conditionnee.}, we have the following equalities
	\begin{equation*}
		A_2(n,t_n) = \E \left[  F \left( {S_{\kappa n}}\btn \right) L^n_{t_n}\left(\oS^+_{\kappa n}(t_n)\right)\right] - \E \left[ F \left( B\btn \right) L_{t_n}(B_{t_n}) \right] = \sum_{j=1}^3 A_{2,j}(n,t_n),
	\end{equation*}
	where
	\begin{align*}
		A_{2,1}(n,t_n)             & = \E \left[  F \left( {S_{\kappa n}}\btn \right) L^n_{t_n}\left(\oS^+_{\kappa n}(t_n)\right)\right] - \E \left[  F \left( {S_{\kappa n}}\btn \right) L_{t_n}\left(\oS^+_{\kappa n}(t_n)\right)\right], \\
		A_{2,2}(n,t_n)             & = \E \left[  F \left( {S_{\kappa n}}\btn \right) L_{t_n}\left(\oS^+_{\kappa n}(t_n)\right)\right] - \E \left[  F \left( {S_{\kappa n}}\btn \right) L_{t_n}\left(S_{\kappa n}(t_n)\right)\right],       \\
		\text{and } A_{2,3}(n,t_n) & = \E \left[  F \left( {S_{\kappa n}}\btn \right) L_{t_n}\left(S_{\kappa n}(t_n)\right)\right] - \E \left[  F \left( B\btn \right) L_{t_n}\left(B_{t_n}\right) \right].
	\end{align*}
	The end of the proof consists in bounding from above each $A_{2,j}(n,t_n)$.
	First, since $\|F\|_\infty \le 1$,  we have
	\begin{equation*}
		A_{2,1}(n,t_n) \le \E \left[ \left| L^n_{t_n}\left(\oS^+_{\kappa n}(t_n)\right) - L_{t_n}\left(\oS^+_{\kappa n}(t_n)\right) \right| \right].
	\end{equation*}
	Using that for all $n \ge 1$, $\displaystyle t_n \in \left[0, 1-n^{-\frac{1}{9}}\right]$, we can apply Lemma \ref{l: lemme implication des hypotheses.}. We get that for all $n \ge 1$, 
	\begin{equation*}
		A_{2,1}(n,t_n) \lesssim \max\left( n^{-\frac{1}{18}}\log(n), \rho(\L,n), \tau(\L,n,\delta)\right).
	\end{equation*}
	Then, for all $n \ge 1$ and for all $x,y \in \R$,
	\begin{equation}
		\left| L_{t_n} (x) - L_{t_n} (y) \right| \le n^{\frac{1}{9}} |x-y|.\label{c: claim fonctions Ltn.}
	\end{equation}
	This is an immediate consequence of the mean value inequality using that for all $x \in \R$,
	\begin{equation*}
		L'_{t_n} (x) = \frac{-x}{(1-t_n)^{\frac{3}{2}}} \exp \left( \frac{-x^2}{2(1-t_n)} \right),
	\end{equation*}
	and then
	\begin{equation*}
		\sup_{x \in \R} \left| L'_{t_n} (x) \right| = \exp \left( - \frac{1}{2} \right) \frac{1}{1-t_n} \le n^{\frac{1}{9}}.
	\end{equation*}
	By the fact that $\|F\|_\infty \le 1$ and by \eqref{c: claim fonctions Ltn.},
	\begin{equation}
		A_{2,2}(n,t_n) \le n^{\frac{1}{9}} \E \left[ \left| \oS^+_{\kappa n} (t_n) - S_{\kappa n} (t_n) \right| \right].\label{eq: preuve generale A22.}
	\end{equation}
	By the definitions of $\oS_{\kappa n}$ and $S_{\kappa n}$,
	\begin{equation}
		\left| \oS^+_{\kappa n} (t_n) - S_{\kappa n} (t_n) \right| \le \frac{1}{\sqrt{n}} \left| X_{\ent + 1} \right|.\label{eq: preuve generale A22 2.}
	\end{equation}
	Since the $X_i$ are i.i.d. random variables, combining \eqref{eq: preuve generale A22.} and \eqref{eq: preuve generale A22 2.} yields
	\begin{equation*}
		A_{2,2}(n,t_n) \le n^{-\frac{7}{18}} \E \left[ |X_1| \right].
	\end{equation*}
	The assumptions on the moments of $\L$ in Theorem \ref{thm: theorem principal general.} ensure that $\E \left[ |X_1| \right] < \infty$. Therefore, noting that for $n$ sufficiently large, $\displaystyle n^{-\frac{7}{18}} \le n^{-\frac{1}{18}} \log(n)$, we get that for all $n \ge 1$,
	\begin{equation*}
		A_{2,2}(n,t_n) \lesssim n^{-\frac{1}{18}}\log(n) \le \max\left( n^{-\frac{1}{18}}\log(n), \rho(\L,n), \tau(\L,n,\delta)\right).
	\end{equation*}
	Finally, to deal with $A_{2,3}(n,t_n)$, define for every $n \ge 1$,
	\begin{equation*}
		\begin{array}{rcl}
			G_n \, : \, \cC([0,1],\R) & \to     & \R                                                                                \\
			\omega                    & \mapsto & \frac{1}{2} n^{-\frac{1}{9}} F \left( \omega_{|t_n} \right) L_{t_n}(\omega(t_n)).
		\end{array}
	\end{equation*}
	Then, for every $\omega_1,\omega_2 \in \cC([0,1],\R)$,
	\begin{multline*}
		|G_n(\omega_1) - G_n(\omega_2)|
		= \frac{1}{2} n^{-\frac{1}{9}}
		\left|
		F \left( \omega_{1|t_n} \right) L_{t_n}(\omega_1(t_n))
		- F \left( \omega_{2|t_n} \right) L_{t_n}(\omega_2(t_n))
		\right| \\
		\le \frac{1}{2} n^{-\frac{1}{9}}
		\Big[
			\left| F \left( \omega_{1|t_n} \right) \right|
			\left| L_{t_n}(\omega_1(t_n)) - L_{t_n}(\omega_2(t_n)) \right| \\
			\qquad\qquad
			+ \left| L_{t_n}(\omega_2(t_n)) \right|
			\left| F \left( \omega_{1|t_n} \right)
			- F \left( \omega_{2|t_n} \right) \right|
			\Big].
	\end{multline*}
	Now, it is clear that
	$\displaystyle \left| F \left( \omega_{1|t_n} \right) \right| \le 1$ since $\|F\|_\infty \le 1$. By
	\eqref{c: claim fonctions Ltn.}, we have
	\begin{equation*}
		\left| L_{t_n}(\omega_1(t_n)) - L_{t_n}(\omega_2(t_n)) \right| \le n^{\frac{1}{9}} \left| \omega_1(t_n) - \omega_2(t_n) \right| \le n^{\frac{1}{9}} \| \omega_1 - \omega_2 \|_\infty.
	\end{equation*}
	Moreover, the inequality
	\begin{equation*}
		\|L_{t_n}\|_\infty = \frac{1}{\sqrt{1-t_n}} \le n^{\frac{1}{18}},
	\end{equation*}
	induces  $\displaystyle \left| L_{t_n}(\omega_2(t_n)) \right| \le n^{\frac{1}{18}}$.
	Since  $F$ is $1$-Lipschitz, we have the bound:
	\begin{equation*}
		\left| F \left( \omega_{1|t_n} \right) - F \left( \omega_{2|t_n} \right) \right| \le \| \omega_{1|t_n} - \omega_{2|t_n} \|_\infty \le \|\omega_1-\omega_2\|_\infty.
	\end{equation*}
	Hence, it follows that
	\begin{equation*}
		|G_n(\omega_1) - G_n(\omega_2)| \le \frac{1}{2} n^{-\frac{1}{9}} \left( n^{\frac{1}{9}} \| \omega_1 - \omega_2 \|_\infty + n^{\frac{1}{18}} \| \omega_1 - \omega_2 \|_\infty \right) \le \| \omega_1 - \omega_2 \|_\infty,
	\end{equation*}
	and $G_n$ is $1$-Lipschitz.
	It allows us to use Theorem 3.4 in \cite{CouDec2020}.
	By this theorem, for every $n \ge 1$,
	\begin{align*}
		\E \left[G_n \left( S_{\kappa n} \right) \right] - \E \left[G_n \left( B \right) \right] & \le \sup_{G \in \mathrm{Lip}_1(\cC([0,1],\R))} \E \left[ G(S_{\kappa n}) \right] - \E \left[ G(B) \right] \lesssim n^{-\frac{1}{6}} \log(n).
	\end{align*}
	To conclude, we just have to introduce the factor $\displaystyle 2n^{\frac{1}{9}}$ to get
	\begin{align*}
		A_{2,3}(n,t_n) = 2n^{\frac{1}{9}} \left( \E \left[G_n \left( S_{\kappa n} \right) \right] - \E \left[G_n \left( B \right) \right] \right) & \lesssim n^{-\frac{1}{18}} \log(n)                                                  \\
		                                                                                                                                          & \lesssim \max\left( n^{-\frac{1}{18}}\log(n), \rho(\L,n), \tau(\L,n,\delta)\right).
	\end{align*}
\end{proof}

\begin{proof}[Proof of Lemma \ref{c: claim pour majorer Aunntn.}.]
	For every $n \ge 1$, recall that $\displaystyle t_n = 1 - n^{-\frac{1}{9}}$ and let
	\begin{equation*}
		[t_n] = \frac{\lceil t_n \kappa n \rceil}{\kappa n}\cdotp
	\end{equation*}
	For every $x \in \R$, let $\{x\} = x - \lfloor x \rfloor$.
	Let $s \in [0,1]$. If $s \le t_n$ then
	\begin{equation*}
		S_{\kappa n} (s) - {S_{\kappa n}}_{|t_n} (s) = 0.
	\end{equation*}
	If $\displaystyle s \in \left[ t_n, [t_n] \right]$ then
	\begin{equation*}
		\left| S_{\kappa n} (s) - {S_{\kappa n}}_{|t_n} (s) \right| = \frac{\{\kappa n s\} - \{\kappa n t_n\}}{\sqrt{\kappa n}} \left| X_{\lfloor \kappa n t_n \rfloor + 1} \right| \le \frac{\{\kappa n (1-t_n)\}}{\sqrt{\kappa n}} \left| X_{\lfloor \kappa n t_n \rfloor + 1} \right|.
	\end{equation*}
	If $\displaystyle s > [t_n]$ then
	\begin{align*}
		\left| S_{\kappa n} (s) - {S_{\kappa n}}_{|t_n} (s) \right| & \le \left| S_{\kappa n} (s) - S_{\kappa n} \left( [t_n] \right) \right| + | S_{\kappa n} \left( [t_n] \right) - S_{\kappa n} (t_n) |                                                                                                                                                         \\
		                                                            & \le \left| \frac{1}{\sqrt{\kappa n}} \sum_{i = \kappa n [t_n] + 1}^{\lfloor \kappa n s \rfloor} X_i + \frac{\{ \kappa n s \}}{\sqrt{\kappa n}} X_{\lfloor \kappa n s \rfloor + 1}\right| + \frac{\{\kappa n (1-t_n)\}}{\sqrt{\kappa n}} \left| X_{\lfloor \kappa n t_n \rfloor + 1} \right|.
	\end{align*}
	Thus, we deduce that
	\begin{multline*}
		\sup_{s \in [0,1]} \left| S_{\kappa n} (s) - {S_{\kappa n}}_{|t_n} (s) \right| \\ \le \sup_{s \in \left[ [t_n], 1 \right]} \left| \frac{1}{\sqrt{\kappa n}} \sum_{i = \kappa n [t_n] + 1}^{\lfloor \kappa n s \rfloor} X_i + \frac{\{ \kappa n s \}}{\sqrt{\kappa n}} X_{\lfloor \kappa n s \rfloor + 1}\right| + \frac{\{\kappa n (1-t_n)\}}{\sqrt{\kappa n}} \left| X_{\lfloor \kappa n t_n \rfloor + 1} \right|.
	\end{multline*}
	Now, since $(X_i, \, i \ge 1)$ is a sequence of i.i.d. random variables, given that $S_{\kappa n}(1)=0$,
	\begin{multline*}
		\sup_{s \in [[t_n], 1]} \Bigg|
		\frac{1}{\sqrt{\kappa n}} \sum_{i = \kappa n [t_n] + 1}^{\lfloor \kappa n s \rfloor} X_i
		+ \frac{\{\kappa n s\}}{\sqrt{\kappa n}} X_{\lfloor \kappa n s \rfloor + 1}
		\Bigg| \\
		\text{and} \quad
		\sup_{s \in [[t_n], 1]} \Bigg|
		\frac{1}{\sqrt{\kappa n}} \sum_{i = 1}^{\lfloor \kappa n s \rfloor - \kappa n [t_n]} X_i
		+ \frac{\{\kappa n s\}}{\sqrt{\kappa n}} X_{\lfloor \kappa n s \rfloor - \kappa n [t_n] + 1}
		\Bigg|
	\end{multline*}
	have the same distribution. Similarly, the random variables $\displaystyle X_{\lfloor \kappa n t_n \rfloor + 1}$ and $X_1$ have the same distribution given that $S_{\kappa n}(1)=0$.
	Hence,
	\begin{multline}
		\E \left[ \|S_{\kappa n} - {S_{\kappa n}}_{|t_n} \|_\infty \Bigl | S_{\kappa n}(1)=0 \right]
		=   \E \left[ \left. \sup_{s \in [0,1]} \left| S_{\kappa n} - {S_{\kappa n}}_{|t_n}
		\right| \right| S_{\kappa n}(1)=0 \right] \\
		\shoveleft{\le  \E \left[ \left. \sup_{s \in \left[ [t_n], 1 \right]}
				\left| \frac{1}{\sqrt{\kappa n}} \sum_{i = \kappa n [t_n] + 1}^{\lfloor \kappa n s \rfloor}
				X_i + \frac{\{ \kappa n s \}}{\sqrt{\kappa n}} X_{\lfloor \kappa n s \rfloor + 1}\right| \right|
				S_{\kappa n}(1)=0 \right]}		\\
		\shoveright{+
			\E \left[ \left. \frac{\{\kappa n (1-t_n)\}}{\sqrt{\kappa n}} \left| X_{\lfloor \kappa n t_n \rfloor + 1}
				\right| \right| S_{\kappa n}(1)=0 \right]
		}\\
		\shoveleft{\le \E \left[ \left. \sup_{s \in \left[ [t_n], 1 \right]}
				\left| \frac{1}{\sqrt{\kappa n}} \sum_{i = 1}^{\lfloor \kappa n s \rfloor -
						\kappa n [t_n]} X_i + \frac{\{ \kappa n s \}}{\sqrt{\kappa n}} X_{\lfloor \kappa n s \rfloor -
						\kappa n [t_n] + 1} \right| \right| S_{\kappa n}(1)=0 \right]
		}\\
		+ \E \left[ \left. \frac{\{\kappa n (1-t_n)\}}{\sqrt{\kappa n}} \left| X_1 \right| \right| S_{\kappa n}(1)=0 \right].\label{eq: preuve majoration A un claim technique 1.}
	\end{multline}
	On the one hand, 
	\begin{multline*}
		\sup_{s \in [[t_n], 1]} \Bigg|
		\frac{1}{\sqrt{\kappa n}} \sum_{i = 1}^{\lfloor \kappa n s \rfloor - \kappa n [t_n]} X_i
		+ \frac{\{\kappa n s\}}{\sqrt{\kappa n}} X_{\lfloor \kappa n s \rfloor - \kappa n [t_n] + 1}
		\Bigg| \\
		= \sup_{s \in [0, 1-[t_n]]} \Bigg|
		\frac{1}{\sqrt{\kappa n}} \sum_{i = 1}^{\lfloor \kappa n s \rfloor} X_i
		+ \frac{\{\kappa n s\}}{\sqrt{\kappa n}} X_{\lfloor \kappa n s \rfloor + 1}
		\Bigg| \\
		\le \sup_{s \in [0, 1-t_n]} \Bigg|
		\frac{1}{\sqrt{\kappa n}} \sum_{i = 1}^{\lfloor \kappa n s \rfloor} X_i
		+ \frac{\{\kappa n s\}}{\sqrt{\kappa n}} X_{\lfloor \kappa n s \rfloor + 1}
		\Bigg|
		= \|S_{\kappa n | 1-t_n}\|_\infty,
	\end{multline*}
	where the inequality comes from the fact that $1-[t_n] \le 1-t_n$.
	This gives
	\begin{multline}
		\E \left[ \left. \sup_{s \in \left[ [t_n], 1 \right]}
			\left| \frac{1}{\sqrt{\kappa n}} \sum_{i = 1}^{\lfloor \kappa n s
					\rfloor - \kappa n [t_n]} X_i + \frac{\{ \kappa n s \}}{\sqrt{\kappa n}}
			X_{\lfloor \kappa n s \rfloor - \kappa n [t_n] + 1} \right| \right| S_{\kappa n}(1)=0 \right] \\
		\le \E \left[ \left. \|S_{\kappa n | 1-t_n}\|_\infty \right| S_{\kappa n}(1)=0 \right].\label{eq: preuve majoration A un claim technique 2.}
	\end{multline}
	On the other hand,
	\begin{equation}
		\frac{\{\kappa n (1-t_n)\}}{\sqrt{\kappa n}} \left| X_1 \right| \le \frac{1}{\sqrt{\kappa n}} |X_1| = \left| S_{\kappa n} \left( \frac{1}{\kappa n} \right) \right|.\label{eq: preuve majoration A un claim technique 3.}
	\end{equation}
	Since $\displaystyle \frac{1}{1 - t_n} = n^{\frac{1}{9}} \le \kappa n$, we get that $\displaystyle \frac{1}{\kappa n} \in [0,1-t_n]$ and thus
	\begin{equation}
		\left| S_{\kappa n} \left( \frac{1}{\kappa n} \right) \right| \le \sup_{s \in [0,1-t_n]} \left| S_{\kappa n} (s) \right| = \|S_{\kappa n | 1-t_n}\|_\infty.\label{eq: preuve majoration A un claim technique 4.}
	\end{equation}
	Combining \eqref{eq: preuve majoration A un claim technique 3.} and \eqref{eq: preuve majoration A un claim technique 4.}, we get
	\begin{equation}
		\E \left[ \left. \frac{\{\kappa n (1-t_n)\}}{\sqrt{\kappa n}} \left| X_1 \right| \right| S_{\kappa n}(1)=0 \right] \le \E \left[ \left. \|S_{\kappa n | 1-t_n}\|_\infty \right| S_{\kappa n}(1)=0 \right].\label{eq: preuve majoration A un claim technique 5.}
	\end{equation}
	It allows us to conclude since \eqref{eq: claim pour majorer Aunntn.} holds for every $n \ge 1$ by combining \eqref{eq: preuve majoration A un claim technique 1.}, \eqref{eq: preuve majoration A un claim technique 2.} and \eqref{eq: preuve majoration A un claim technique 5.}.
\end{proof}

\begin{proof}[Proof of Lemma \ref{l: lemme convergence des fonctions de probas.}.]
	Let $\displaystyle \delta > \frac{1}{18}$ and $\eta < \min(2\delta,1)$.
	In this proof, we write $a \lesssim b$ if there exists a constant $C$ depending only on $\L$, $\delta$ and $\eta$ such that $a \le C b$.
	In \cite{Durrett2019}, Theorem 3.5.3 gives that
	\begin{equation}
		\sup_{a \in \fS_n(s)} \left| \frac{\sqrt{n}}{h} p^n_s(a) - p_s(a) \right| \xrightarrow[n \to \infty]{}0. \label{eq: resultat de durrett.}
	\end{equation}
	Here, we follow the proof of this theorem and its ideas to check that we can control the rate of convergence in \eqref{eq: resultat de durrett.}.
	Suppose that the assumptions of Lemma \ref{l: lemme convergence des fonctions de probas.} are satisfied.
	For every $n \ge 1$, denote $\displaystyle I_n = \left[ n^{- \eta}, 1 \right]$.
	For every $n \ge 1$ and every $s \in I_n$, denote by $\psi_{s,n}$ the characteristic function of $\displaystyle \frac{1}{\sqrt{n}} \sum_{i=1}^{\ens} X_i = x$.
	Then, for every $u \in \R$,
	\begin{equation*}
		\psi_{s,n}(u) = \E \left[ \exp \left( \frac{iu}{\sqrt{n}} \sum_{j=1}^{\ens} X_j \right) \right] = \phi^{\ens} \left( \frac{u}{\sqrt{n}} \right),
	\end{equation*}
	where $\phi$ denotes the characteristic function of $\L$.
	Following the explanations in \cite{Durrett2019}, we get for every $a$ in $\fS_n(s)$,
	\begin{equation}
		\frac{\sqn}{h} p^n_s(a) = \frac{1}{2\pi} \int_{-\pi\sqrt{n}/h}^{\pi\sqrt{n}/h} \mathrm{e}^{-iua} \phi^{\ens} \left( \frac{u}{\sqrt{n}} \right) \mathrm{d}u.\label{eq: claim section 2.1 1.}
	\end{equation}
	On the other hand, since for a standard Brownian motion $B$, $B(s)$ is a centered Gaussian random variable of variance $s$, using the inversion formula for characteristic functions,
	\begin{equation}
		p_s(a) = \frac{1}{2\pi} \int_\R \mathrm{e}^{-iua} \exp \left( -\frac{su^2}{2} \right) \mathrm{d}u.\label{eq: claim section 2.1 2.}
	\end{equation}
	Now, subtracting \eqref{eq: claim section 2.1 1.} and \eqref{eq: claim section 2.1 2.} as in \cite{Durrett2019}, we get (recall that $\pi>1$ and $\left|\mathrm{e}^{-iua}\right| \le 1$)
	\begin{equation*}
		\left| \frac{\sqn}{h} p^n_s(a) - p_s(a) \right| \le \int_{-\pi\sqn/h}^{\pi\sqn/h} \left| \phi^{\ens} \left( \frac{u}{\sqrt{n}} \right) - \exp \left( -\frac{su^2}{2} \right) \right| \mathrm{d}u + \int_{\pi\sqn/h}^{\infty} \exp \left( -\frac{su^2}{2} \right) \mathrm{d}u.
	\end{equation*}
	In the inequality above, note that the upper bound is independent of $a$.
	Furthermore, since there exists $\epsilon>0$ such that $\displaystyle n^\epsilon \lesssim su^2$ if $\displaystyle u \ge \frac{\pi \sqrt{n}}{h}$ and $s \in I_n$ (recall that $\eta<1$ and $\displaystyle s \ge n^{-\eta}$), it is easy to see that for every $n \ge 1$ and every $s \in I_n$,
	\begin{equation*}
		\int_{\pi\sqn/h}^{\infty} \exp \left( -\frac{su^2}{2} \right) \mathrm{d}u \lesssim n^{- \frac{1}{18}} \le \max \left( n^{- \frac{1}{18}}, \tau(\L,n,\delta) \right).
	\end{equation*}
	Then, to conclude, it remains to prove that for every $n \ge 1$ and every $s \in I_n$,
	\begin{equation}
		A(n,s) := \int_{-\pi\sqn/h}^{\pi\sqn/h} \left| \phi^{\ens} \left( \frac{u}{\sqrt{n}} \right) - \exp \left( -\frac{su^2}{2} \right) \right| \mathrm{d}u \lesssim \max \left( n^{- \frac{1}{18}}, \tau(\L,n,\delta) \right). \label{eq: claim section 2.1 3.}
	\end{equation}
	Thanks to the reasoning given in the proof of Theorem 3.5.3 in \cite{Durrett2019}, and since $\E [X_1] =0$ and $\E [X^2_1] = 1$, there exists $\gamma \in (0,\pi)$ such that for all $u \in [-\gamma\sqn,\gamma\sqn]$,
	\begin{equation*}
		\left| \phi^{\ens} \left( \frac{u}{\sqrt{n}} \right) \right| \le \exp \left( -\frac{su^2}{4} \right).
	\end{equation*}
	Note $\displaystyle \delta' = \min \left(\delta,\frac{1}{2}\right)$.
	Dividing the integral in \eqref{eq: claim section 2.1 3.} into three pieces, we get $\displaystyle A(n,s) \le \sum_{j=1}^3 A_j(n,s)$ where
	\begin{multline*}
		A_1(n,s) = \int_{-n^{\delta'}}^{n^{\delta'}} \left| \phi^{\ens} \left( \frac{u}{\sqrt{n}} \right) - \exp \left( -\frac{su^2}{2} \right) \right| \mathrm{d}u, \\
		\shoveleft{A_2(n,s) = 2 \int_{n^{\delta'}}^{\gamma\sqn} \exp \left( - \frac{su^2}{4} \right) \mathrm{d}u,} \\
		\shoveleft{A_3(n,s) = \int_{-\pi\sqn/h}^{-\gamma\sqn} \left| \phi^{\ens} \left( \frac{u}{\sqrt{n}} \right) - \exp \left( -\frac{su^2}{2} \right) \right| \mathrm{d}u} \\ + \int_{\gamma\sqn}^{\pi\sqn/h} \left| \phi^{\ens} \left( \frac{u}{\sqrt{n}} \right) - \exp \left( -\frac{su^2}{2} \right) \right| \mathrm{d}u.
	\end{multline*}
	Since $\displaystyle \delta' = \min \left(\delta,\frac{1}{2}\right)$, we have
	\begin{equation*}
		A_1(n,s) \le \tau(\L,n,\delta) \le \max \left( n^{- \frac{1}{18}}, \tau(\L,n,\delta) \right).
	\end{equation*}
	Furthermore, by the definitions of $\eta$ and $\delta'$, it is clear that $\eta < 2 \delta'$. It implies that there exists $\epsilon'>0$ such that $\displaystyle su^2 \ge n^{\epsilon'}$ whenever $\displaystyle u \ge n^{\delta'}$ and $s \in I_n$. Then, it follows that for all $n \ge 1$ and $s \in I_n$,
	\begin{equation*}
		A_2(n,s) \lesssim n^{-\frac{1}{18}} \le \max \left( n^{- \frac{1}{18}}, \tau(\L,n,\delta) \right).
	\end{equation*}
	Finally, as in \cite{Durrett2019}, using that the $X_j$ are random variables with lattice distribution with span $h$, there is $\xi \in (0,1)$ such that $|\phi(u)| \le \xi < 1$ for $|u| \in [\gamma,\pi/h]$. Thus,
	\begin{equation*}
		\left| \phi \left( \frac{u}{\sqrt{n}} \right) \right| \le \xi < 1 \text{ for } u \in [\gamma\sqn,\pi\sqn/h].
	\end{equation*}
	Hence, for every $n \ge 1$ and $s \in I_n$, using again that $\eta < 1$, we obtain
	\begin{equation*}
		A_3(n,s) \le 2 \int_{\gamma \sqn}^{\pi\sqn/h} \xi^{\ens} + \exp \left( -\frac{su^2}{2} \right) \mathrm{d}u \lesssim n^{-\frac{1}{18}} \le \max \left( n^{- \frac{1}{18}}, \tau(\L,n,\delta) \right).
	\end{equation*}
\end{proof}

\begin{proof}[Proof of Lemma \ref{l: lemme implication des hypotheses.}.]
	Assume that $\L$ satisfies the assumptions of Theorem \ref{thm: theorem principal general.}.
	Let $\displaystyle \delta > \frac{1}{18}$.
	By the triangle inequality, for every $n \ge 1$ and $\displaystyle t \in \left[ 0, 1-n^{-\frac{1}{9}} \right]$, for every $a \in - \fS_{\kappa n}(1-t)$,
	\begin{align*}
		|L_t(a)-L^n_t(a)| & = \left|\frac{p_{1-t}(-a)}{p_1(0)} - \frac{p^{\kappa n}_{1-t}(-a)}{p^{\kappa n}_1(0)}\right|                                                                                                                                                                                    \\
		                  & \le \left|\frac{p_{1-t}(-a)}{p_1(0)} - \frac{p_{1-t}(-a)}{\frac{\sqrt{\kappa n}}{h}p^{\kappa n}_1(0)}\right| + \left|\frac{p_{1-t}(-a)}{\frac{\sqrt{\kappa n}}{h}p^{\kappa n}_1(0)} - \frac{p^{\kappa n}_{1-t}(-a)}{p^{\kappa n}_1(0)},\right|                                  \\
		                  & \le \|p_{1-t}\|_\infty \frac{\left|\frac{\sqrt{\kappa n}}{h}p^{\kappa n}_1(0)-p_1(0)\right|}{\frac{\sqrt{\kappa n}}{h}p^{\kappa n}_1(0)p_1(0)} + \frac{\left|\frac{\sqrt{\kappa n}}{h}p^{\kappa n}_{1-t}(-a) - p_{1-t}(-a)\right|}{\frac{\sqrt{\kappa n}}{h}p^{\kappa n}_1(0)}.
	\end{align*}
	Since $\displaystyle \frac{\sqrt{\kappa n}}{h}p^{\kappa n}_1(0) \xrightarrow[n \to \infty]{}p_1(0)$, we have for $n$ sufficiently large,
	\begin{equation*}
		\frac{\sqrt{\kappa n}}{h}p^{\kappa n}_1(0) \ge \frac{p_1(0)}{2} = \frac{1}{2\sqrt{2\pi}}\cdotp
	\end{equation*}
	Furthermore, for $\displaystyle t \le 1-n^{-\frac{1}{9}}$, the following inequality holds
	\begin{equation*}
		\|p_{1-t}\|_\infty = \frac{1}{\sqrt{2 \pi (1-t)}} \le \frac{n^{\frac{1}{18}}}{\sqrt{2\pi}}\cdotp
	\end{equation*}
	Thus, we get for any $n$ sufficiently large and $\displaystyle t \in \left[ 0, 1-n^{-\frac{1}{9}} \right]$,
	\begin{multline*}
		\sup_{a \in -\fS_{\kappa n}(1-t)} |L_t(a)-L^n_t(a)| \\
		\le 2 \sqrt{2 \pi} \Bigg(
		n^{\frac{1}{18}} \left| \frac{\sqrt{\kappa n}}{h} p^{\kappa n}_1(0) - p_1(0) \right|
		+ \sup_{s \in \left[ n^{-\frac{1}{9}}, 1 \right]} \sup_{a \in \fS_{\kappa n}(s)}
		\left| \frac{\sqrt{\kappa n}}{h} p^{\kappa n}_s(a) - p_s(a) \right|
		\Bigg).
	\end{multline*}
	The assumptions of Lemma \ref{l: lemme convergence des fonctions de probas.} are satisfied.
	Since $\displaystyle \delta > \frac{1}{18}$, the conclusion of Lemma \ref{l: lemme convergence des fonctions de probas.} holds for $\eta = \frac{1}{9} < \min(2\delta,1)$. This gives a constant $C>0$ such that for all $n \ge 1$,
	\begin{equation*}
		\sup_{s \in \left[ n^{-\frac{1}{9}}, 1 \right]} \sup_{a \in \fS_{\kappa n}(s)} \left|\frac{\sqrt{\kappa n}}{h} p^{\kappa n}_s(a) - p_s(a)\right| \le C \max \left( n^{-\frac{1}{18}}, \tau(\L,n,\delta) \right).
	\end{equation*}
	Noting that, by definition,
	\begin{equation*}
		n^{\frac{1}{18}} \left| \frac{\sqrt{\kappa n}}{h} p^{\kappa n}_1(0) - p_1(0) \right| = \rho (\L,n),
	\end{equation*}
	we conclude the first part of the proof of Lemma \ref{l: lemme implication des hypotheses.}.
	It remains to show that \eqref{eq: hypothese esperance de la difference des fonctions L et Ln.} holds.
	Assume that for every $n \ge 1$ and $\displaystyle t \in \left[ 0, 1 \right]$,
	\begin{equation}
		-\oS^+_{\kappa n}(t) \in \fS_{\kappa n}(1-t) \text{ a.s.}.\label{eq: claim pour ne pas avoir d'indicatrice.}
	\end{equation}
	Then we get that
	\begin{equation*}
		\left| L_t\left(\oS^+_{\kappa n}(t)\right) - L^n_t\left(\oS^+_{\kappa n}(t)\right) \right| \le \sup_{a \in -\fS_{\kappa n}(1-t)} |L_t(a)-L^n_t(a)| \text{ a.s.},
	\end{equation*}
	which proves \eqref{eq: hypothese esperance de la difference des fonctions L et Ln.} thanks to \eqref{eq: lemme implication des hypotheses.}.
	Hence, the aim of the end of the proof is to prove \eqref{eq: claim pour ne pas avoir d'indicatrice.}.
	Let $n \ge 1$ and $t \in [0,1]$.
	By definition of $\kappa$, $\displaystyle \P \left( S_{\kappa n}(1) = 0 \right) > 0$.
	This implies that
	\begin{equation*}
		0 \in \fS_{\kappa n} (1) = \left\{ \frac{\kappa n b + hz}{\sqrt{\kappa n}}, \, z \in \Z \right\}.
	\end{equation*}
	Thus, there exists $z_0 \in \Z$ such that
	\begin{equation}
		hz_0 = -\kappa n b.\label{eq: preuve claim pour ne pas avoir d'indicatrice.}
	\end{equation}
	By definition of $\oS^+_{\kappa n}(t)$,
	\begin{equation*}
		- \oS^+_{\kappa n}(t) \in \left\{ \frac{- \lceil \kappa n t \rceil b + hz}{\sqrt{\kappa n}}, \, z \in \Z \right\} \text{ a.s.}
	\end{equation*}
	Then, it is easy to notice that
	\begin{equation}
		\left\{ \frac{- \lceil \kappa n t \rceil b + hz}{\sqrt{\kappa n}}, \, z \in \Z \right\} \subset \fS_{\kappa n}(1-t),\label{eq: preuve claim pour ne pas avoir d'indicatrice 2.}
	\end{equation}
	which concludes the proof.
	To prove \eqref{eq: preuve claim pour ne pas avoir d'indicatrice 2.}, let us take $\displaystyle x \in \left\{ \frac{- \lceil \kappa n t \rceil b + hz}{\sqrt{\kappa n}}, \, z \in \Z \right\}$. There exists $z_x \in \Z$ such that
	\begin{equation*}
		x = \frac{- \lceil \kappa n t \rceil b + hz_x}{\sqrt{\kappa n}}.
	\end{equation*}
	Since $ - \lceil \kappa n t \rceil = \lfloor \kappa n (1-t) \rfloor - \kappa n$ and using \eqref{eq: preuve claim pour ne pas avoir d'indicatrice.},
	\begin{equation*}
		x = \frac{\lfloor \kappa n (1-t) \rfloor b - \kappa n b + hz_x}{\sqrt{\kappa n}} = \frac{\lfloor \kappa n (1-t) \rfloor b + h(z_0+z_x)}{\sqrt{\kappa n}} \in \fS_{\kappa n}(1-t).
	\end{equation*}
\end{proof}

\subsection{Proof of Theorem \ref{thm: theoreme principal rademacher.}}\label{s: sous-section preuve Rademacher.}

\begin{proof}[Proof of Theorem \ref{thm: theoreme principal rademacher.}]
	In this proof, we write $a \lesssim b$ if there exists $C>0$ which does not depend on $n$ such that $a \le C b$.
	Assume that $\L$ is the Rademacher distribution. Then, $\L$ is a lattice distribution and, if $X$ is a random variable with distribution $\L$, $\E [X] = 0$ and $\displaystyle \E \left[ X^2 \right] = 1$.
	Thus, one can apply Theorem \ref{thm: theorem principal general.} and, to prove Theorem \ref{thm: theoreme principal rademacher.}, it remains to control $\rho(\L,n)$ and $\tau(\L,n,\delta)$ for some $\displaystyle \delta > \frac{1}{18}$.
	First, $\L$ has a span $2$, $\kappa(\L) = 2$ and
	\begin{equation*}
		p^{2n}_1(0) = \binom{2n}{n} \frac{1}{2^{2n}}\cdotp
	\end{equation*}
	The asymptotic development of the central binomial coefficient is well known:
	\begin{equation*}
		\binom{2n}{n} = \frac{4^n}{\sqrt{\pi n}} \left( 1 - \frac{1}{8n} + o \left( \frac{1}{n}\right) \right).
	\end{equation*}
	This gives
	\begin{equation*}
		\left| \frac{\sqrt{\kappa n}}{h} p^{\kappa n}_1(0) - p_1(0) \right| = \left| \frac{\sqrt{n}}{\sqrt{2}} \binom{2n}{n} \frac{1}{2^{2n}} - \frac{1}{\sqrt{2\pi}}\right| \lesssim n^{-1},
	\end{equation*}
	and for all $n \ge 1$,
	\begin{equation}
		\rho(\L,n) \lesssim n^{-\frac{17}{18}}.\label{eq: rade1.}
	\end{equation}
	Furthermore, denote by $\phi$ the characteristic function of $\L$. For every $u \in \R$, $\phi(u) = \cos(u)$.
	We get for every $n \in \N^*$ and $u \in \R$,
	\begin{align*}
		\phi^{\ens} \left( \frac{u}{\sqn} \right) & = \left( \cos \left( \frac{u}{\sqn} \right) \right)^{\ens}                                                                              \\
		                                          & = \exp \left( \ens \left[ - \frac{u^2}{2n} - \frac{u^4}{12 n^2} + \frac{u^4}{n^2}\epsilon_1 \left(\frac{u}{\sqn}\right) \right] \right) \\
	\end{align*}
	where $\epsilon_1$ is a continuous function such that $\displaystyle \epsilon_1(x) \xrightarrow[x \to 0]{} 0$.
	For every $\displaystyle \delta' \in \left(0, \frac{1}{4}\right)$, by a straightforward Taylor expansion, we get a constant $C>0$ such that for all $n \ge1$ and for all $\displaystyle u \in \left[ -n^{\delta'}, n^{\delta'} \right]$,
	\begin{equation*}
		\left| \phi^{\ens} \left( \frac{u}{\sqn} \right) - \exp \left( - \frac{s u^2}{2} \right) \right| \le C \frac{u^4}{n} + \frac{u^2}{n}\cdotp
	\end{equation*}
	Take $\displaystyle \delta = \frac{1}{6} > \frac{1}{18}$. Then, for all $n \ge 1$, we have
	\begin{equation}
		\tau(\L,n,\delta) = \int_{-n^\delta}^{n^\delta} \left| \phi^{\lfloor ns \rfloor} \left( \frac{u}{\sqrt{n}} \right) - \exp \left( - \frac{s u^2}{2} \right) \right| \mathrm{d}u \lesssim n^{-\frac{1}{6}}.\label{eq: rade2.}
	\end{equation}
	Hence, by Theorem \ref{thm: theorem principal general.}, and combining \eqref{eq: rade1.} and \eqref{eq: rade2.}, we get that for all $n \ge 1$,
	\begin{equation*}
		\dist_{F.M.}\left(\mu_n(\L),B^{br}\right) \lesssim n^{-\frac{1}{18}} \log(n).
	\end{equation*}
\end{proof}

\subsection{Proof of Theorem \ref{thm: theoreme principal poisson moins 1.}}\label{s: sous-section preuve poisson moins un.}

\begin{proof}[Proof of Theorem \ref{thm: theoreme principal poisson moins 1.}]
	In this proof, we write $a \lesssim b$ if there exists $C>0$ which does not depend on $n$ such that $a \le C b$.
	Assume that $\L$ is the Poisson minus one distribution of parameter $1$. Then, $\L$ is a lattice distribution and, if $X$ is a random variable with distribution $\L$, $P := X + 1$ has a Poisson distribution of parameter $1$. Thus $E[X] = E[P] - 1 = 0$ and $\displaystyle E \left[ X^2 \right] = E \left[ P^2 \right] - 2E[P] + 1 = 1$. As for the proof of Theorem \ref{thm: theoreme principal rademacher.}, one can apply Theorem \ref{thm: theorem principal general.} and, to prove Theorem \ref{thm: theoreme principal poisson moins 1.}, it remains to control $\rho(\L,n)$ and $\tau(\L,n,\delta)$ for some $\displaystyle \delta > \frac{1}{18}$.
	The first remark is that $\L$ has a span $1$ and $\kappa(\L)=1$.
	Then, for every $i \ge 1$, define $P_i := X_i + 1$.
	For all $n \ge 1$,
	\begin{equation*}
		p^n_1(0) = P \left( \sum_{i = 1}^n X_i = 0 \right) = P \left( \sum_{i=1}^n P_i = n \right) = \frac{n^n}{n!} \mathrm{e}^{-n},
	\end{equation*}
	since $\displaystyle \sum_{i=1}^n P_i$ has a Poisson distribution of parameter $n$.
	Now, we use the well-known asymptotic development of the Stirling formula:
	\begin{equation*}
		n! = \sqrt{2 \pi n} \left( \frac{n}{\mathrm{e}} \right)^n \left[1 + \frac{1}{12n} + o \left( \frac{1}{n} \right) \right].
	\end{equation*}
	This gives
	\begin{equation*}
		p^n_1(0) = \frac{1}{\sqrt{2 \pi n}} \left( 1 - \frac{1}{12n} + o \left( \frac{1}{n} \right) \right).
	\end{equation*}
	Recalling that $\displaystyle p_1(0) = \frac{1}{\sqrt{2 \pi}}$, we get
	\begin{equation*}
		\left| \sqrt{n} p^n_1(0) - p_1(0) \right| = \frac{1}{\sqrt{2 \pi}} \left| \frac{1}{12n} + o \left( \frac{1}{n} \right) \right| \lesssim n^{-1},
	\end{equation*}
	and for all $n \ge 1$,
	\begin{equation}
		\tau(\L,n) \lesssim n^{-\frac{17}{18}}.\label{eq: poisson1.}
	\end{equation}
	Furthermore, denote by $\phi$ the characteristic function of $\L$. For every $u \in \R$,
	\begin{equation*}
		\phi(u) = \exp \left( \mathrm{e}^{iu} - iu - 1 \right).
	\end{equation*}
	By a Taylor expansion, for every $n \ge 1$ and $u \in \R$,
	\begin{align*}
		\phi^{\lfloor ns \rfloor} \left( \frac{u}{\sqrt{n}} \right) & 
		= \exp \left( - \frac{su^2}{2} \right) \exp \left( - i \frac{su^3}{6 \sqrt{n}} + \frac{su^3}{\sqrt{n}} \epsilon_1 \left( \frac{u}{\sqrt{n}}\right) + \{ns\} \frac{u}{\sqrt{n}} \epsilon_2 \left( \frac{u}{\sqrt{n}}\right) \right),
	\end{align*}
	where $\{x\} = x - \lfloor x \rfloor$ for all $x \in \R$, and for all $i \in \{1,2\}$, $\displaystyle |\epsilon_i(x)| \xrightarrow[x \to 0]{} 0$.
	For every $\displaystyle \delta' \in \left(0, \frac{1}{6}\right)$, by the above calculation, we get a constant $C>0$ such that for all $n \ge1$ and for all $\displaystyle u \in \left[ -n^{\delta'}, n^{\delta'} \right]$,
	\begin{equation*}
		\left| \phi^{\ens} \left( \frac{u}{\sqn} \right) - \exp \left( - \frac{s u^2}{2} \right) \right| \lesssim \frac{u^3}{\sqrt{n}}.
	\end{equation*}
	Take $\displaystyle \delta = \frac{1}{12} > \frac{1}{18}$. We get for all $n \ge 1$,
	\begin{equation}
		\tau(\L,n,\delta) = \int_{-n^\delta}^{n^\delta} \left| \phi^{\lfloor ns \rfloor} \left( \frac{u}{\sqrt{n}} \right) - \exp \left( - \frac{s u^2}{2} \right) \right| \mathrm{d}u \lesssim n^{-\frac{1}{6}}.\label{eq: poisson2.}
	\end{equation}
	Hence, by Theorem \ref{thm: theorem principal general.}, and combining \eqref{eq: poisson1.} and \eqref{eq: poisson2.}, we get for all $n \ge 1$,
	\begin{equation*}
		\dist_{F.M.}\left(\mu_n(\L),B^{br}\right) \lesssim n^{-\frac{1}{18}} \log(n).
	\end{equation*}
\end{proof}

\end{document}